\date{\empty}
\numberwithin{equation}{section} \theoremstyle{plain}
\newtheorem*{thm*}{Main Theorem}
\newtheorem{theorem}{Theorem}[section]
\newtheorem{corollary}[theorem]{Corollary}
\newtheorem*{corollary*}{Corollary}
\newtheorem*{claim*}{Claim}
\newtheorem{lemma}[theorem]{Lemma}
\newtheorem*{lemma*}{Lemma}
\newtheorem*{proposition*}{Proposition}
\newtheorem{remark}[theorem]{Remark}
\newtheorem*{remark*}{Remark}
\newtheorem*{example*}{Example}
\newtheorem*{question*}{Question}
\newtheorem*{definition*}{Definition}
\newtheorem*{acknowledgements*}{ACKNOWLEDGEMENTS}
\begin{document}
\begin{center}
{\large  \bf Additive and product properties of Drazin inverses of elements in a ring}\\
\vspace{0.8cm} {\small \bf  Huihui Zhu, \ \ Jianlong Chen\footnote{Corresponding author.
Department of Mathematics, Southeast University, Nanjing 210096, China.
Email: ahzhh08@sina.com (H. Zhu), jlchen@seu.edu.cn(J. Chen)}}
\end{center}

\bigskip

{ \bf  Abstract:}  \leftskip0truemm\rightskip0truemm  We study the Drazin inverses of the sum and product of two elements in a ring. For Drazin invertible elements $a$ and $b$ such that $a^2b=aba$ and $b^2a=bab$, it is shown that $ab$ is Drazin invertible and that $a+b$ is Drazin invertible if and only if $1+a^Db$ is Drazin invertible. Moreover, the formulae of $(ab)^D$ and $(a+b)^D$ are presented. Thus, a generalization of the main result of Zhuang, Chen et al. (Linear Multilinear Algebra 60 (2012) 903-910) is given.
\\{  \textbf{Keywords:}} Drazin inverse, group inverse, spectral idempotent, ring
 \\{ \textbf{AMS Subject Classifications:}} 15A09, 16U80
\bigskip


\section { \bf Introduction}

~~~~Throughout this paper, $R$ is an associative ring with unity $1$. The symbols $R^D$, $R^{\rm nil}$ denote, the sets of Drazin invertible elements, nilpotent elements of $R$, respectively. The commutant of an element $a\in R$ is defined as ${\rm comm}(a)=\{x\in R: ax=xa\}$. An element $a\in R$ is Drazin invertible if there  exists $b\in R$ such that
$$b\in {\rm comm}(a),~ bab=b, ~a^k=a^{k+1}b \eqno(1.1)$$
for some positive integer $k$. The such least $k$ is called the Drazin index of $a$, denoted by $k={\rm ind}(a)$. If ${\rm ind}(a)=1$, then $b$ is called the group inverse of $a$ and denoted by $a^\#$.

The conditions in (1.1) are equivalent to
$$b\in {\rm comm}(a), ~bab=b, ~a-a^2b\in R^{\rm nil}.\eqno(1.2)$$
 Drazin [7] proved that if $a$ is Drazin invertible and $ab=ba$, then $a^Db=ba^D$. By $a^\pi=1-aa^D$ we mean the spectral idempotent of $a$.

The problem of Drazin inverse of the sum of two Drazin invertible elements was first considered by Drazin in his celebrated paper [7]. It was proved that $(a+b)^D=a^D+b^D$ under the condition that $ab=ba=0$ in associative rings. It is well known that the product $ab$ of two commutative Drazin invertible elements $a$, $b$ is Drazin invertible and $(ab)^D=a^Db^D=b^Da^D$ in a ring. In 2011, Wei and Deng [12] considered the relations between the Drazin inverses of $A+B$ and $1+A^DB$ for two commutative complex matrices $A$ and $B$. For two commutative Drazin invertible elements $a,b\in R$,Zhuang, Chen et al. [16] proved that $a+b$ is Drazin invertible if and only if $1+a^Db$ is Drazin invertible. Moreover, the representation of $(a+b)^D$ was obtained. More results on Drazin inverse can be found in [1-6,8,11-16].

For any elements $a, b\in R$, $ab=ba$ implies that $a^2b=aba$ and $b^2a=bab$. However, the converse need not be true. For example, take
\begin{center}
$a=\left(
     \begin{array}{cc}
       1 & 0 \\
       0 & 0\\
     \end{array}
   \right)
$, $b=\left(
        \begin{array}{cc}
          0 & 0  \\
          1 & 0 \\
          \end{array}
      \right)
$.
\end{center}
It is easy to get $a^2b=aba$ and $b^2a=bab$. However, $ab\neq ba$. Under the conditions $P^2Q=PQP$ and $Q^2P=QPQ$, Liu, Wu and Yu [9] characterized the relations between the Drazin inverses of $P+Q$ and $1+P^DQ$ for complex matrices $P$ and $Q$ by using the methods of splitting complex matrices into blocks. In this paper, we extend the results in [9] to a ring $R$. For $a, b\in R^D$, it is shown that $ab\in R^D$ and that $a+b\in R^D$ if and only if $1+a^Db\in R^D$ under the conditions $a^2b=aba$ and $b^2a=bab$. Moreover, the expressions of $(ab)^D$ and $(a+b)^D$ are presented. Consequently, some results in [9,12,16] can be deduced from our results.

\section{\bf Some lemmas}

In this section, we start with some useful lemmas.
\begin{lemma} Let $a,b\in R$ with $a^2b=aba$. Then for any positive integer $i$, the following hold:\\
$(1)$ $ a^{i+1}b =a^iba=aba^i,$ $a^{2i}b=a^iba^i,$\hfill$(2.1)$\\
$(2)$ $(ab)^i=a^ib^i.$\hfill$(2.2)$
\end{lemma}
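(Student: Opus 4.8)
The plan is to prove both parts by induction on $i$, deriving everything from the single hypothesis $a^2b=aba$.
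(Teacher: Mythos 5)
What you have written is a plan, not a proof: there is no base case, no inductive step, and no identification of the algebraic facts that make the induction close. The substance of this lemma lies precisely in those details. For part (1), induction is not even needed: from $a^2b=aba$ one gets directly $a^{i+1}b=a^{i-1}(a^2b)=a^{i-1}(aba)=a^iba$, which says that $a^ib$ (and in particular $ab$) commutes with $a$; since $ab\in{\rm comm}(a)$ implies $ab\in{\rm comm}(a^i)$, one also gets $aba^i=a^{i+1}b$, and $a^{2i}b=a^iba^i$ follows because $a^ib$ commutes with $a^i$. This is essentially how the paper argues part (1).

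The real danger in a bare ``induction on $i$'' shows up in part (2). If in the inductive step you expand $(ab)^{i+1}=(ab)^i(ab)=a^ib^i\,ab$, you are stuck: the hypothesis $a^2b=aba$ gives no way to move $a$ to the left past $b^i$ (the companion identity $b^2a=bab$ is \emph{not} assumed in this lemma, so no commutation on that side is available). The step only closes if you multiply the new factor on the other side, $(ab)^{i+1}=(ab)(ab)^i=ab\,a^ib^i$, and then invoke the identity $aba^i=a^{i+1}b$ from part (1) to obtain $a^{i+1}b^{i+1}$. So the induction for (2) genuinely depends on (1) and on expanding in the correct order; none of this appears in your proposal, so as it stands it does not constitute a proof.
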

\begin{proof} (1) Since $a^2b=aba$, we have $a^{i+1}b=a^{i-1}a^2b=a^{i-1}aba=a^iba$. This shows that $a^ib\in {\rm comm}(a)$ and $a^ib\in {\rm comm}(a^i)$.

From $ab\in {\rm comm}(a)$, it follows that $ab\in {\rm comm}(a^i)$, i.e., $aba^i=a^{i+1}b$. Thus, $a^{i+1}b =a^iba=aba^i$.

Therefore, we also obtain $a^{2i}b=a^iba^i$ by $a^ib\in {\rm comm}(a^i)$.

(2) follows by induction.
\end{proof}
\begin{lemma} Let $a,b\in R$ with $a^2b=aba$ and $b^2a=bab$.\\
$(1)$ If $a$ or $b$ is nilpotent, then $ab$ and $ba$ are nilpotent.\\
$(2)$ If $a$ and $b$ are nilpotent, then $a+b$ is nilpotent.
\end{lemma}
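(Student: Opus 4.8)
The plan is to read off both parts from Lemma 2.1, which already supplies the two identities I need: $(ab)^i=a^ib^i$, and — applying Lemma 2.1 to the pair $(b,a)$, which satisfies $b^2a=bab$ — also $(ba)^i=b^ia^i$ together with $b^{i+1}a=b^iab=bab^i$.

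For part (1), suppose first that $a^n=0$. Then Lemma 2.1(2) gives $(ab)^n=a^nb^n=0$, and the symmetric version gives $(ba)^n=b^na^n=0$, so both $ab$ and $ba$ are nilpotent. If instead $b^m=0$, the same two identities give $(ab)^m=a^mb^m=0$ and $(ba)^m=b^ma^m=0$. So part (1) costs essentially nothing once Lemma 2.1(2) is available on both sides.

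For part (2), write $a^n=0$ and $b^m=0$. The crux is a normal-form statement: every word $w$ in the letters $a,b$ having $\alpha$ occurrences of $a$ and $\beta$ occurrences of $b$ satisfies $w=a^\alpha b^\beta$ if $w$ begins with $a$, and $w=b^\beta a^\alpha$ if $w$ begins with $b$. Granting this, the conclusion is immediate: expanding $(a+b)^{n+m-1}$ as the sum of all words of length $n+m-1$, each such word has $\alpha+\beta=n+m-1$, hence $\alpha\ge n$ or $\beta\ge m$; either way the associated $a^\alpha b^\beta$ or $b^\beta a^\alpha$ contains a factor $a^n=0$ or $b^m=0$ and so vanishes. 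Thus $(a+b)^{n+m-1}=0$ and $a+b$ is nilpotent.

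To prove the normal form I would first establish the collection identity
$$a b^j a^\gamma = a^{\gamma+1} b^j \qquad (j,\gamma\ge 0),$$
and its $a\leftrightarrow b$ mirror $b a^i b^\delta=b^{\delta+1}a^i$. The case $\gamma=1$ is the key step: using $b^j a=bab^{j-1}$ (the pair-$(b,a)$ form of Lemma 2.1(1)) followed by $aba=a^2b$ gives $ab^ja=a(bab^{j-1})=(aba)b^{j-1}=a^2b^j$; the general $\gamma$ then follows by peeling off one $a$ at a time. With these identities in hand the normal form is proved by strong induction on the length of $w$: writing $w=aw'$ when $w$ begins with $a$, the inductive hypothesis puts $w'$ into the form $a^{\alpha-1}b^\beta$ or $b^\beta a^{\alpha-1}$, and in the second case the collection identity $a\cdot b^\beta a^{\alpha-1}=a^\alpha b^\beta$ closes the step; words beginning with $b$ are handled symmetrically.

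I expect the main obstacle to be the collection identity $ab^ja^\gamma=a^{\gamma+1}b^j$, and in particular arranging the rewriting so that it terminates rather than cycling back on itself. The clean route is to move the block of trailing $a$'s leftward one letter at a time via the single identity $b^j a=bab^{j-1}$, and to resist trying to commute $a$ past $b$ directly — which the hypotheses do not license, since $ab\ne ba$ in general. Once the rewriting is organized as a genuine induction on $\gamma$ (and then on word length), the remaining work is only bookkeeping of exponents.
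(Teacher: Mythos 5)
Your proof is correct, and part (1) coincides with the paper's: both simply invoke Lemma 2.1(2) for the pair $(a,b)$ and, by symmetry of the hypotheses, for $(b,a)$. For part (2) the underlying idea is also the same --- every word in $a,b$ collects to $a^\alpha b^\beta$ or $b^\beta a^\alpha$ according to its first letter --- but the execution differs: the paper does not prove this, it cites reference [9] for the closed formula $(a+b)^k=\sum_{i=0}^{k-1}C_{k-1}^i(a^{k-i}b^i+b^{k-i}a^i)$ and then takes $k={\rm ind}(a)+{\rm ind}(b)$, whereas you prove the collection fact from scratch via the identity $ab^ja^\gamma=a^{\gamma+1}b^j$ (and its mirror) together with induction on word length. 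Your normal-form statement is precisely what makes the cited formula true: the $C_{k-1}^i$ words of length $k$ beginning with $a$ and containing $i$ letters $b$ each equal $a^{k-i}b^i$, and symmetrically for words beginning with $b$. So your argument is a self-contained replacement for the step the paper black-boxes, and it even yields the marginally sharper exponent $n+m-1$ in place of $n+m$. The only points to make explicit in a final write-up are the trivial edge cases of the induction (one-letter words, and the degenerate instances $j=0$ or $\gamma=0$ of the collection identity), and the observation that $b^ja=bab^{j-1}$ for $j\ge 1$ indeed follows from Lemma 2.1(1) applied to $(b,a)$; none of these present any difficulty.
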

\begin{proof}  (1) By Lemma 2.1(2).

(2) Note that
$(a+b)^k=\displaystyle{\sum_{i=0}^{k-1}}C_{k-1}^i(a^{k-i}b^i+b^{k-i}a^i)$ in [9]. We have $(a+b)^k=0$ by taking $k={\rm ind}(a)+{\rm ind}(b)$.
\end{proof}

\begin{lemma}Let $a,b \in R$ with $a^2b=aba$ and $b^2a=bab$. If $a \in R^D$, then\\
$(1)$ $(a^D)^2b=a^Dba^D,$\hfill$(2.3)$\\
$(2)$ $b^2a^D=ba^Db.$\hfill$(2.4)$
\end{lemma}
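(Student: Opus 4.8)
The plan is to reduce both identities to a single ``master relation'' coming from the double commutant property of the Drazin inverse, and then to feed in the second hypothesis $b^2a=bab$ to handle part (2).

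First I would note that the hypothesis $a^2b=aba$ says precisely that $ab\in{\rm comm}(a)$, since $a(ab)=a^2b=aba=(ab)a$. Applying Drazin's result quoted in the introduction (an element commuting with $a$ also commutes with $a^D$) to $x=ab$ upgrades this to $a^D(ab)=(ab)a^D$, and using $a^Da=aa^D$ this reads
\begin{equation*}
aa^Db=aba^D.\tag{i}
\end{equation*}
Left-multiplying (i) by $a^D$ and using the standard identity $a^Daa^D=a^D$ gives what I would treat as the master relation
\begin{equation*}
a^Db=aa^Dba^D.\tag{ii}
\end{equation*}
Part (1) then falls out at once: left-multiplying (ii) by $a^D$ and again invoking $a^Daa^D=a^D$ yields $(a^D)^2b=a^Dba^D$, which is (2.3).

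For part (2) I expect the actual work. The obstacle is that, although $ab$ commutes with $a$, the element $ba$ need not, so the double commutant trick does \emph{not} directly produce a relation with $b$ sitting on the left of $a^D$; this is exactly where the hitherto unused hypothesis $b^2a=bab$ must enter. The step I would single out as the crux is the auxiliary identity
\begin{equation*}
baa^Db=b^2aa^D,\tag{iii}
\end{equation*}
which I would obtain by right-multiplying $b^2a=bab$ by $a^D$ and then rewriting $aba^D$ via (i) as $aa^Db$.

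With (ii) and (iii) in hand, part (2) is a short computation: left-multiplying (ii) by $b$ gives $ba^Db=baa^Dba^D$, and replacing the block $baa^Db$ using (iii) turns this into $ba^Db=b^2aa^D\cdot a^D=b^2a^D$, the last equality because $aa^Da^D=a^D$. This is (2.4). The only ingredients I rely on are the routine Drazin identities $a^Da=aa^D$ and $a^Daa^D=a^D$, Drazin's commutation result from the introduction, and the two hypotheses; in particular no appeal to $b\in R^D$ is made, consistent with the statement assuming only $a\in R^D$.
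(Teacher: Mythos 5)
Your proof is correct, and it takes essentially the same route as the paper: both arguments rest on the observation that $a^2b=aba$ means $ab\in{\rm comm}(a)$, upgrade this to $ab\in{\rm comm}(a^D)$ via Drazin's theorem, and then finish with short manipulations that insert or absorb $aa^D$ factors, invoking $b^2a=bab$ only in part (2). Your packaging into the master relations (i)--(iii) is just a reorganization of the same computation the paper does inline.
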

\begin{proof} (1) Since $a^2b=aba$, we have $ab\in {\rm comm}(a^D)$ by [7, Theorem 1]. Hence,
\begin{center}
$(a^D)^2b=(a^D)^2a^Dab=(a^D)^2aba^D=a^Dba^D$.
\end{center}

(2) Note that $ab\in {\rm comm}(a^D)$. It follows that
\begin{center}
$b^2a^D=b^2a(a^D)^2=bab(a^D)^2=b(a^D)^2ab=ba^Db$.
\end{center}
\end{proof}

\begin{lemma} Let $a,b \in R^D$ with $a^2b=aba$ and $b^2a=bab$. Then\\
$(1)$ $\{ab, a^Db, ab^D, a^Db^D\} \subseteq {\rm comm}(a),$ \hfill$(2.5)$\\
$(2)$ $ \{ba,b^Da,ba^D,b^Da^D\} \subseteq {\rm comm}(b)$. \hfill$(2.6)$
\end{lemma}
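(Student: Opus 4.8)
My plan is to work inside the commutant ${\rm comm}(a)$, using the elementary fact that ${\rm comm}(a)$ is a subring of $R$: if two elements each commute with $a$, then so does their product. The generators I can feed into this subring are immediate. First, $a\in{\rm comm}(a)$, and $a^{D}\in{\rm comm}(a)$ straight from the defining relation (1.1), so every power $(a^{D})^{n}$ lies in ${\rm comm}(a)$ as well. Second, Lemma 2.1(1) with $i=1$ reads $a(ab)=a^{2}b=aba=(ab)a$, which is precisely $ab\in{\rm comm}(a)$. This already settles the first member of (2.5) and supplies the raw material for the others.

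The point to watch is that $b$ by itself need \emph{not} commute with $a$ (the running counterexample has $ab\neq ba$), so I cannot move $b$ freely past $a$; I must keep $b$ packaged with an $a$ on its left. For $a^{D}b$ I would therefore use the Drazin identity $(a^{D})^{2}a=a^{D}$ to write
$$a^{D}b=(a^{D})^{2}(ab),$$
which exhibits $a^{D}b$ as a product of $(a^{D})^{2}\in{\rm comm}(a)$ and $ab\in{\rm comm}(a)$; hence $a^{D}b\in{\rm comm}(a)$.

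The element $ab^{D}$ does not factor through $ab$, so here I would instead call on Lemma 2.3(2). Since the hypotheses $a^{2}b=aba$ and $b^{2}a=bab$ are interchanged by swapping $a$ and $b$, and since $b\in R^{D}$, the symmetric form of (2.4) gives $a^{2}b^{D}=ab^{D}a$, i.e. $a(ab^{D})=(ab^{D})a$, so $ab^{D}\in{\rm comm}(a)$. With this new generator in hand, the last member is handled exactly like $a^{D}b$: from $(a^{D})^{2}a=a^{D}$ one gets
$$a^{D}b^{D}=(a^{D})^{2}(ab^{D}),$$
again a product of two elements of ${\rm comm}(a)$, so $a^{D}b^{D}\in{\rm comm}(a)$. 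This proves (2.5), and (2.6) follows by interchanging $a$ and $b$ throughout, the hypotheses being symmetric.

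The main obstacle is conceptual rather than computational: it is tempting to try to prove $a^{D}b\in{\rm comm}(a)$ by commuting $b$ around, which is exactly what fails here. The resolution is to never separate $b$ from its accompanying $a$ and to insert powers of $a^{D}$ only through the identity $(a^{D})^{2}a=a^{D}$, after which closure of ${\rm comm}(a)$ under multiplication does all the work; the single genuinely external input is the symmetric use of Lemma 2.3(2) to reach $ab^{D}$.
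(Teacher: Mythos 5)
Your proof is correct and takes essentially the same route as the paper: your factorizations $a^Db=(a^D)^2(ab)$ and $a^Db^D=(a^D)^2(ab^D)$, combined with closure of ${\rm comm}(a)$ under products, repackage exactly the paper's explicit computations, and your appeal to the symmetric form of Lemma 2.3(2) for $a^2b^D=ab^Da$ simply cites what the paper re-derives inline from $ba\in{\rm comm}(b^D)$. Both arguments then get (2.6) from the same symmetry of the hypotheses.
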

\begin{proof}  It is enough to prove (1) since we can obtain (2) by the symmetry of $a$ and $b$.

(1) By hypothesis, we obtain $ab\in {\rm comm}(a)$ and
\begin{center}
$aa^Db=(a^D)^2a^2b=(a^D)^2aba=a^Dba$.
\end{center}

Since $ba\in {\rm comm}(b)$ implies that $ba\in {\rm comm}(b^D)$, it follows that
\begin{center}
$ab^Da=a(b^D)^2ba=aba(b^D)^2=a^2b(b^D)^2=a^2b^D$.
\end{center}

Note that $ab^Da=a^2b^D$. We get
\begin{center}
$aa^Db^D=(a^D)^2a^2b^D=(a^D)^2ab^Da=a^Db^Da$.
\end{center}
\end{proof}

\begin{lemma} Let $a, b\in R^D$ and $\xi=1+a^Db$. If $a^2b=aba$ and $b^2a=bab$, then
$\{a, a^D, ab, a^Db, ab^D, a^Db^D\} \subseteq {\rm comm}(\xi).\hfill(2.7)$
\end{lemma}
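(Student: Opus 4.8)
The plan is to reduce the whole statement to a single commutation. Since $\xi = 1 + a^Db$ and the identity $1$ is central, an element $x\in R$ satisfies $x\xi = \xi x$ if and only if $x$ commutes with $w := a^Db$. So it suffices to prove that each of the six elements $a, a^D, ab, a^Db, ab^D, a^Db^D$ commutes with $w$, and I would treat the six cases one at a time, drawing on the relations already established in Lemmas 2.3 and 2.4 together with Drazin's observation [7, Theorem 1] that anything commuting with $a$ must also commute with $a^D$.

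Three of the cases I expect to be immediate. The element $w$ commutes with itself, so $a^Db \in {\rm comm}(\xi)$ is trivial. For $a$, the inclusion $a^Db \in {\rm comm}(a)$ from Lemma 2.4(1) says exactly that $a$ and $w$ commute. For $a^D$, the identity $(a^D)^2b = a^Dba^D$ from Lemma 2.3(1) is precisely $a^D(a^Db) = (a^Db)a^D$, i.e.\ $a^D$ commutes with $w$.

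Two of the remaining cases, $ab^D$ and $a^Db^D$, I would handle by one mechanism: each lies in ${\rm comm}(a)$ by Lemma 2.4(1), hence also in ${\rm comm}(a^D)$ by [7, Theorem 1], and I would use this to slide the leading factor past $w$ and then collapse both products to a symmetric expression. For $ab^D$ I would compute $(ab^D)(a^Db)$ and $(a^Db)(ab^D)$ separately and reduce both to $a^Da\,bb^D$, using $ab^D \in {\rm comm}(a^D)$ on one side and the relation $aa^Db = a^Dba$ (from the proof of Lemma 2.4(1)) on the other, together with $b^Db = bb^D$. The case $a^Db^D$ is analogous: using $a^Db^D \in {\rm comm}(a^D)$ and Lemma 2.3(1), I expect both $(a^Db^D)(a^Db)$ and $(a^Db)(a^Db^D)$ to reduce to $(a^D)^2bb^D$.

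The one case I expect to be the genuine obstacle is $ab$, because here the two products do not simplify to a manifestly symmetric form. Writing $(ab)(a^Db) = (ab\,a^D)b = a^Dab^2$ via $ab \in {\rm comm}(a^D)$, and $(a^Db)(ab) = a^D(bab) = a^Db^2a$ via the hypothesis $bab = b^2a$, I am left needing $a^Dab^2 = a^Db^2a$. This is exactly where the interplay of the two defining relations is essential: I would prove it by rewriting $a^Dab^2 = (aa^Db)b = (a^Dba)b = a^D(bab) = a^Db^2a$, using $aa^Db = a^Dba$ (from Lemma 2.4(1)) and once more $bab = b^2a$. Once the $ab$ case is settled, assembling the six inclusions yields $\{a, a^D, ab, a^Db, ab^D, a^Db^D\} \subseteq {\rm comm}(\xi)$.
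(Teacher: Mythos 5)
Your proposal is correct and follows essentially the same route as the paper: both reduce commutation with $\xi=1+a^Db$ to commutation with $a^Db$, get the case of $a$ from Lemma 2.4, the case of $a^D$ from Drazin's result [7, Theorem 1] (your use of Lemma 2.3(1) here is an equivalent shortcut), and settle the products $ab$, $ab^D$, $a^Db^D$ by short computations that slide factors past $a^Db$ using Lemma 2.4 together with [7, Theorem 1]. The only difference is presentational: the paper verifies $ab$ explicitly (via $(a^Db)ab=a(a^Db)b=a^D(ab)b=ab(a^Db)$, which in fact avoids the hypothesis $b^2a=bab$ that your detour through $a^Db^2a$ invokes) and dismisses the remaining cases with ``similarly,'' whereas you write them all out.
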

\begin{proof} Since $a^Db\in {\rm comm}(a)$, we have $a\in {\rm comm}(\xi)$. By [7, Theorem 1], $a^D\in {\rm comm}(\xi)$.

Observing that $(a^Db)ab \stackrel{(2.5)}{=}a(a^Db)b=a^D(ab)b=ab(a^Db)$, it follows that $ab\in {\rm comm}(\xi)$.

Similarly, $a^Db, ab^D, a^Db^D\in {\rm comm}(\xi)$.

Hence, $\{a, a^D, ab, a^Db, ab^D, a^Db^D\} \subseteq {\rm comm}(\xi)$.
\end{proof}
\begin{lemma} Let $a, b\in R^D$ with $a^2b=aba$ and $b^2a=bab$. Then for any positive integer $i$, the following hold:\\
$(1)$ $ab^Db^Da=(ab^D)^2=a^2(b^D)^2$, \hfill$(2.8)$\\
$(2)$ $(ab^D)^{i+1}=ab^D(b^Da)^i=a^{i+1}(b^D)^{i+1}$, \hfill$(2.9)$\\
$(3)$ $(a^Db)^{i+1}=a^Db(ba^D)^i=(a^D)^{i+1}b^{i+1}$,\hfill$(2.10)$\\
$(4)$ $ba^Da^Db=(ba^D)^2=b^2(a^D)^2$, \hfill$(2.11)$\\
$(5)$ $(ba^D)^{i+1}=ba^D(a^Db)^i=b^{i+1}(a^D)^{i+1}$, \hfill$(2.12)$\\
$(6)$ $(b^Da)^{i+1}=b^Da(ab^D)^i=(b^D)^{i+1}a^{i+1}$.\hfill$(2.13)$
\end{lemma}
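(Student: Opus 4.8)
The plan is to prove only the three identities (2.8), (2.9) and (2.10); the other three, (2.11)--(2.13), then follow immediately by interchanging the roles of $a$ and $b$, since the hypotheses $a^2b=aba$ and $b^2a=bab$ are symmetric in $a$ and $b$ (so are the conclusions of Lemmas 2.3 and 2.4, in their (1)/(2) pairs). One should resist the temptation to exploit a ``reversal'' symmetry instead: reversing products turns $a^2b=aba$ into $ba^2=aba$, and this need not hold, as the example $a=\left(\begin{smallmatrix}1&0\\0&0\end{smallmatrix}\right)$, $b=\left(\begin{smallmatrix}0&0\\1&0\end{smallmatrix}\right)$ in the introduction shows. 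Consequently (2.10) is \emph{not} a mirror image of (2.9) and genuinely needs its own argument.

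For (2.8) I would give a short direct computation. Interchanging $a$ and $b$ in (2.3) and (2.4) yields $(b^D)^2a=b^Dab^D$ and $a^2b^D=ab^Da$. Hence $ab^Db^Da=a[(b^D)^2a]=a(b^Dab^D)=(ab^Da)b^D=a^2(b^D)^2$, and likewise $(ab^D)^2=(ab^Da)b^D=a^2(b^D)^2$, which settles all three equalities of (2.8) at once.

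The heart of the lemma is the pair of power formulas, which I would establish by induction on $i$. The step I expect to be the main obstacle is to arrange the induction so that it genuinely collapses rather than merely shuffling one normal form into another: if one expands $(a^Db)^{i+1}=(a^Db)^i(a^Db)$ by peeling a factor off the \emph{right}, every rewrite using (2.3)/(2.4) is reversible and the computation degenerates into the tautology $(a^Db)^{i+1}=(a^Db)^{i+1}$. The remedy is to peel from the \emph{left} and use a commutation fact to absorb the accumulated powers. For (2.9), by (2.5) we have $ab^D\in{\rm comm}(a)$, hence $ab^D\in{\rm comm}(a^i)$, so assuming $(ab^D)^i=a^i(b^D)^i$ gives $(ab^D)^{i+1}=(ab^D)a^i(b^D)^i=a^i(ab^D)(b^D)^i=a^{i+1}(b^D)^{i+1}$. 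For (2.10) the identical scheme works with $a^D$ as the outer factor: by (2.3), $a^Db\in{\rm comm}(a^D)$, so $(a^Db)^{i+1}=(a^Db)(a^D)^ib^i=(a^D)^i(a^Db)b^i=(a^D)^{i+1}b^{i+1}$.

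Finally, the ``middle'' equalities in (2.9) and (2.10) I would reduce to their $i=1$ instances. From (2.8) we get $(ab^D)(b^Da)=ab^Db^Da=(ab^D)^2$, and a parallel computation using (2.4) gives $(a^Db)(ba^D)=a^Db^2a^D=a^D(ba^Db)=(a^Db)^2$. Each of these then propagates by a one-line induction: for instance $ab^D(b^Da)^{i+1}=[ab^D(b^Da)^i](b^Da)=(ab^D)^{i+1}(b^Da)=(ab^D)^i[(ab^D)(b^Da)]=(ab^D)^{i+2}$, and symmetrically for $a^Db(ba^D)^{i}$. Chaining these with the power formulas just proved yields the full three-term chains (2.9) and (2.10), and interchanging $a$ and $b$ delivers (2.11)--(2.13).
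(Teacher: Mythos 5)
Your proposal is correct and takes essentially the same route as the paper's own proof: reduce (2.11)--(2.13) to (2.8)--(2.10) via the $a\leftrightarrow b$ symmetry of the hypotheses, verify (2.8) by a short direct computation from the earlier lemmas, and prove the power formulas by induction, peeling a factor off the left and absorbing it using the commutation facts $ab^D\in{\rm comm}(a)$ and $a^Db\in{\rm comm}(a^D)$. The only cosmetic differences are that the paper justifies (2.8) by citing (2.5)/(2.6) rather than the $a\leftrightarrow b$-swapped Lemma 2.3, and it folds the middle equalities $ab^D(b^Da)^i$ into the same induction (using (2.8) at the inductive step) instead of running a separate one as you do.
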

\begin{proof} It is sufficient to prove (1)-(3).

(1) We have
$$ab^Db^Da=ab^D(b^Da)\stackrel{(2.6)}{=}a(b^Da)b^D=(ab^D)^2.$$

According to Lemma 2.4, we get
$$ab^Db^Da=ab^D(ab^D)\stackrel{(2.5)}{=}a(ab^D)b^D=a^2(b^D)^2.$$

(2) It is just (1) for $i=1$. Assume that the equality holds for $i=k$, i.e., $$(ab^D)^{k+1}=ab^D(b^Da)^k=a^{k+1}(b^D)^{k+1}.$$
For $i=k+1$,
\begin{eqnarray*}
(ab^D)^{k+2}=ab^D(ab^D)^{k+1}=ab^Dab^D(b^Da)^k \stackrel{(2.8)}{=}ab^Db^Da (b^Da)^k=ab^D(b^Da)^{k+1}
\end{eqnarray*}
and

$(ab^D)^{k+2}=ab^D(ab^D)^{k+1}=ab^Da^{k+1}(b^D)^{k+1}\stackrel{(2.5)}{=}a^{k+1}ab^D(b^D)^{k+1}= a^{k+2}(b^D)^{k+2}$.

Thus, (2) holds for any positive integer $i$.

(3) Its proof is similar to (2).
\end{proof}

\begin{lemma} Let $a, b\in R^D$ with $a^2b=aba$ and $b^2a=bab$. If $a_1=b^\pi a^\pi b$ and $a_2=bb^Daa^\pi$, then $a_1-a_2$ is nilpotent.
\end{lemma}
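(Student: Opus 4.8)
The plan is to first rewrite $a_2$ so that it shares a common right factor with $a_1$, then combine the two, and finally extract nilpotency from the two nilpotent elements $aa^\pi=a-a^2a^D$ and $bb^\pi=b-b^2b^D$. Recall these are nilpotent: since $a^\pi$ commutes with $a$ and is idempotent, $(aa^\pi)^{{\rm ind}(a)}=a^{{\rm ind}(a)}a^\pi=a^{{\rm ind}(a)}-a^{{\rm ind}(a)+1}a^D=0$, and similarly for $bb^\pi$. Throughout I would use the mixed identities already available: $ab\in{\rm comm}(a^D)$ (by [7, Theorem 1], since $ab\in{\rm comm}(a)$ by $(2.5)$), the identity $bb^Da=b^Dab$ read off from $b^Da\in{\rm comm}(b)$ in $(2.6)$, the hypothesis $a^2b=aba$, and the Drazin relations $b^\pi b^D=0$ and $bb^Db^D=b^D$.

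First I would simplify $a_2$. Starting from $a_2=bb^Daa^\pi=bb^Da-bb^Da^2a^D$ and using $bb^Da=b^Dab$ together with $ab\cdot a=aba=a^2b$ and $(ab)a^D=a^D(ab)$, one gets $bb^Da^2=b^Da^2b$ and then $bb^Da^2a^D=b^Da^2a^Db$, so that
\[
a_2=b^Dab-b^Da^2a^Db=b^D(a-a^2a^D)b=b^D a\,a^\pi b .
\]
Since $a_1=b^\pi(a^\pi b)$ and $a_2=(b^Da)(a^\pi b)$ now share the right factor $a^\pi b$, I obtain the compact form
\[
a_1-a_2=(b^\pi-b^Da)\,a^\pi b=\bigl(1-b^D(a+b)\bigr)a^\pi b .
\]

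Next I would record the orthogonal splitting coming from the idempotents $bb^D$ and $b^\pi$. Using $b^\pi b^D=0$ and $bb^Db^D=b^D$ one checks from the compact form that $b^\pi(a_1-a_2)=a_1$ and $bb^D(a_1-a_2)=-a_2$; thus the two summands sit on complementary left idempotents, namely $b^\pi a_1=a_1$, $bb^Da_1=0$, $bb^Da_2=a_2$, $b^\pi a_2=0$. It then suffices to show that $a_1$ and $a_2$ are each nilpotent and to bound the powers of $a_1-a_2$ accordingly. For $a_2=bb^D(aa^\pi)$ the nilpotent factor $aa^\pi$ is explicit, and I expect to finish by Lemma 2.2$(1)$ applied to the pair $bb^D,\ aa^\pi$. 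For $a_1=b^\pi a^\pi b$ one computes $a_1^{k}=b^\pi a^\pi(bb^\pi)a^\pi(bb^\pi)\cdots a^\pi b$ with $k-1$ interior copies of the nilpotent $bb^\pi$, so a bounded number of such factors should force $a_1^{k}=0$.

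The main obstacle is precisely that the spectral data of $a$ and of $b$ do not commute: as the $2\times2$ example in the introduction already shows, $aa^Db\neq baa^D$, so $a^\pi$ need not commute with $b$, with $bb^D$, or with $bb^\pi$. Consequently neither $a_1-a_2$ nor its two pieces are block diagonal, and the interior copies of $bb^\pi$ in $a_1^{k}$ and of $aa^\pi$ in $a_2^{k}$ cannot simply be collected into one high power. The hard part will be to transport the nilpotent factors $aa^\pi$ and $bb^\pi$ past the non-commuting spectral factors using the identities of Lemmas 2.4 and 2.6, and to invoke Lemma 2.2 to absorb the resulting products and sums of nilpotent elements, thereby showing that the transported factors still accumulate fast enough to annihilate a fixed power of $a_1-a_2$.
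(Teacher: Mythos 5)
Your preliminary manipulations are correct: the rewriting $a_2=bb^Daa^\pi=b^Da\,a^\pi b$ does follow from $b^Da\in{\rm comm}(b)$, $aba=a^2b$ and $ab\in{\rm comm}(a^D)$, and the factorization $a_1-a_2=(b^\pi-b^Da)a^\pi b$ together with the relations $b^\pi a_1=a_1$, $bb^Da_1=0$, $bb^Da_2=a_2$, $b^\pi a_2=0$ is a tidy observation that the paper does not make. But none of this proves the lemma, and the reduction you then invoke is false: knowing that $a_1$ and $a_2$ are nilpotent and are supported on complementary left idempotents does \emph{not} imply that $a_1-a_2$ is nilpotent. In the ring of $2\times 2$ matrices, with matrix units $E_{ij}$, take the complementary idempotents $e=E_{11}$, $f=E_{22}$ and set $a_1=E_{12}$, $a_2=-E_{21}$; then $ea_1=a_1$, $fa_1=0$, $fa_2=a_2$, $ea_2=0$, and $a_1^2=a_2^2=0$, yet $a_1-a_2=E_{12}+E_{21}$ is invertible. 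So the left-idempotent splitting buys you nothing by itself; some two-sided interaction between $a_1$ and $a_2$ coming from the hypotheses $a^2b=aba$, $b^2a=bab$ must be established.

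That interaction is exactly what the paper proves and what your proposal defers to ``the hard part.'' The paper's proof consists of three genuine computations, all absent from your text: (i) $(b^\pi a^\pi b)^{n+1}=b^\pi a^\pi(bb^\pi)^n a^\pi b$, obtained by pushing $a^\pi$ past $bb^\pi$ via the identities $b^\pi ba^\pi b=b^2b^\pi a^\pi$ and $ba^\pi b^\pi=bb^\pi a^\pi$ (equations $(2.14)$--$(2.15)$, derived from the commutation lemmas) --- this is precisely the collection of the interior $bb^\pi$ factors that you correctly identify as problematic but do not carry out; (ii) $(bb^Daa^\pi)^n=bb^D(aa^\pi)^n$, proved by a direct chain of applications of $(2.5)$ and $(2.6)$ --- your alternative of citing Lemma 2.2(1) for the pair $(bb^D,\,aa^\pi)$ would itself require verifying the hypotheses $(bb^D)^2(aa^\pi)=(bb^D)(aa^\pi)(bb^D)$ and $(aa^\pi)^2(bb^D)=(aa^\pi)(bb^D)(aa^\pi)$, which you do not do and which is no easier than the direct computation; and (iii), the decisive step, the zero-product relations $a_1^2a_2=0$ and $a_2a_1=0$, which yield $a_1^2a_2=a_1a_2a_1=0$ and $a_2^2a_1=a_2a_1a_2=0$ and thereby put the pair $(a_1,-a_2)$ within the scope of Lemma 2.2(2), giving nilpotency of the sum of two nilpotents. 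Your proposal contains no substitute for (iii); without it the argument cannot close, as the matrix example above shows, so the proof is incomplete in its essential steps.
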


\begin{proof} Firstly, we prove $a_1=b^\pi a^\pi b$ is nilpotent. According to Lemma 2.6, we have the following equalities
 $$b^\pi ba^\pi b=b^2b^\pi a^\pi \eqno(2.14)$$
 and
$$ b a^\pi b^\pi =bb^\pi a^\pi.\eqno(2.15)$$

Hence, we obtain
\begin{eqnarray*}
(b^\pi a^\pi b)^3&=& b^\pi a^\pi( b b^\pi a^\pi b )b^\pi a^\pi b \stackrel{(2.14)}{=} b^\pi a^\pi(  b^\pi b^2 a^\pi )b^\pi a^\pi b \\
&=& b^\pi a^\pi b^\pi (b^2 a^\pi b^\pi) a^\pi b \stackrel{(2.15)}{=}b^\pi a^\pi b^\pi (b^2 b^\pi a^\pi ) a^\pi b \\
&=& b^\pi a^\pi (b b^\pi)^2 a^\pi  a^\pi b= b^\pi a^\pi (b b^\pi)^2 a^\pi b.
\end{eqnarray*}

By induction, $(b^\pi a^\pi b)^{n+1}=b^\pi a^\pi (bb^\pi)^n a^\pi b$. Since $bb^\pi$ is nilpotent, $b^\pi a^\pi b=a_1$ is nilpotent.

Secondly, we show that $a_2=bb^Daa^\pi$ is nilpotent.
As
\begin{eqnarray*}
(bb^D aa^\pi)^2 &=& (bb^D aa^\pi)(bb^D aa^\pi)= bb^Da(1-aa^D)bb^Da(1-aa^D)\\
&=& bb^D(1-aa^D)abb^Da(1-aa^D)\\
&\stackrel{(2.5)}{=}& bb^Dab(1-aa^D)b^Da(1-aa^D)\\
&\stackrel{(2.6)}{=}& bbb^Da(1-aa^D)b^Da(1-aa^D)\\
&=& bbb^D(1-aa^D)ab^Da(1-aa^D)\\
&\stackrel{(2.5)}{=}& bbb^Dab^D(1-aa^D)a(1-aa^D)\\
&\stackrel{(2.6)}{=}& bbb^Db^Da(1-aa^D)a(1-aa^D)\\
&=& bb^Da(1-aa^D)a(1-aa^D)\\
&=& bb^D (aa^\pi)^2,
\end{eqnarray*}
by induction, $(bb^D aa^\pi)^n=bb^D (aa^\pi)^n$. Since $aa^\pi$ is nilpotent, $bb^D aa^\pi=a_2$ is nilpotent.

Finally, we prove that $a_1-a_2$ is nilpotent.

Since
\begin{center}
 $a_1^2a_2= b^\pi a^\pi b b^\pi a^\pi b bb^Daa^\pi\stackrel{(2.6)}{=} b^\pi a^\pi bb^\pi b^2b^Daa^\pi=0$
\end{center}
and
\begin{eqnarray*}
a_2a_1&=& bb^Daa^\pi b^\pi a^\pi b=bb^Da^\pi (a b^\pi) a^\pi b\\
&\stackrel{(2.5)}{=}& bb^D(a b^\pi) a^\pi  a^\pi b=bb^D(a b^\pi) a^\pi b\\
&=& b(b^Da) b^\pi a^\pi b\stackrel{(2.6)}{=}b b^\pi (b^Da) a^\pi b\\
&=&0,
\end{eqnarray*}

we have $a_1^2a_2=a_1a_2a_1=0$ and $a_2^2a_1=a_2a_1a_2=0$.

As $a_1$ and $a_2$ are nilpotent, $b^\pi a^\pi b-bb^Daa^\pi=a_1-a_2$ is nilpotent by Lemma 2.2(2).
\end{proof}

\begin{lemma} Let $a, b\in R^D$ with $a^2b=aba$ and $b^2a=bab$ and $\xi=1+a^Db\in R^D$. Suppose $b_1=a\xi\xi^\pi+\xi^Daa^\pi$ and $b_2=b^\pi a^\pi b-bb^Daa^\pi$. Then $b_1+b_2$ is nilpotent.
\end{lemma}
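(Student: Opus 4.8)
The plan is to handle $b_1$ and $b_2$ separately and then merge them via Lemma 2.2(2). First observe that $b_2=b^\pi a^\pi b-bb^D a a^\pi$ is precisely the element $a_1-a_2$ of Lemma 2.7 (with $a_1=b^\pi a^\pi b$ and $a_2=bb^D a a^\pi$), so $b_2$ is already known to be nilpotent and needs no further work.

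Next I would show $b_1=a\xi\xi^\pi+\xi^D a a^\pi$ is nilpotent. Write $c_1=a\xi\xi^\pi$ and $c_2=\xi^D a a^\pi$. By Lemma 2.5, $a,a^D\in{\rm comm}(\xi)$, so $a,a^D,a^\pi$ commute with $\xi$ and, via [7, Theorem 1], with $\xi^D$ and $\xi^\pi$. Since $\xi\xi^\pi$ and $aa^\pi$ are nilpotent, $(c_1)^n=a^n(\xi\xi^\pi)^n$ and $(c_2)^n=(\xi^D)^n(aa^\pi)^n$ vanish for large $n$, so each $c_i$ is nilpotent. Moreover $\xi^\pi\xi^D=0$ gives $c_1c_2=a\xi(\xi^\pi\xi^D)aa^\pi=0$, and $\xi^D\xi\xi^\pi=\xi\xi^D-\xi\xi^D=0$ together with the commutation relations gives $c_2c_1=(\xi^D\xi\xi^\pi)(aa^\pi a)=0$. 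With $c_1c_2=c_2c_1=0$ one has $(c_1+c_2)^n=c_1^n+c_2^n$, so $b_1$ is nilpotent.

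The heart of the argument is the combination step, where I would apply Lemma 2.2(2) to the pair $(b_1,b_2)$: it suffices to verify $b_1^2b_2=b_1b_2b_1$ and $b_2^2b_1=b_2b_1b_2$. The crucial enabling identity is $a^\pi\xi^\pi=0$. Indeed $a^\pi a^D=a^D-a^Daa^D=0$, so $a^\pi(\xi-1)=a^\pi a^D b=0$, whence $a^\pi\xi=a^\pi$ and $a^\pi\xi^k=a^\pi$ for all $k$; multiplying $(\xi\xi^\pi)^m=\xi^m\xi^\pi=0$ by $a^\pi$ then yields $a^\pi\xi^\pi=0$, equivalently $\xi^\pi=aa^D\xi^\pi$, $a^\pi=\xi\xi^D a^\pi$ and $a^\pi\xi^D=a^\pi$. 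This already forces several cross terms to vanish; for example $a_2c_1=bb^D a\,a^\pi(a\xi\xi^\pi)=bb^D a\,a\xi(a^\pi\xi^\pi)=0$, paralleling the identity $a_2a_1=0$ of Lemma 2.7.

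The main obstacle is that $\xi,\xi^D,\xi^\pi$ commute freely with the $a$-data but not with $b$, so in $b_1b_2$ and $b_2b_1$ the genuinely mixed terms — those in which a factor $b$ or $b^\pi$ is trapped between $\xi$-data and $a$-data, such as $c_2a_1=\xi^D a a^\pi b^\pi a^\pi b$ or $a_1c_2=b^\pi a^\pi b\,\xi^D a a^\pi$ — do not simply vanish, so neither product is zero and Lemma 2.2(2) cannot be bypassed. These terms must be rewritten using (2.5), (2.6) and $b^\pi b^D=0$, exactly as in the computations of $a_2a_1=0$ and $(bb^D aa^\pi)^n=bb^D(aa^\pi)^n$ in Lemma 2.7. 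Carrying out this bookkeeping and matching $b_1^2b_2$ with $b_1b_2b_1$, and $b_2^2b_1$ with $b_2b_1b_2$, is the delicate part; once these two quasi-commutation relations hold, Lemma 2.2(2) applied to the nilpotent elements $b_1$ and $b_2$ gives that $b_1+b_2$ is nilpotent.
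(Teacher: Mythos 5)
Your overall frame coincides with the paper's: $b_2=a_1-a_2$ is nilpotent by Lemma 2.7; $b_1$ is a sum of two commuting (indeed mutually annihilating) nilpotents and hence nilpotent; and the sum $b_1+b_2$ is to be handled by Lemma 2.2(2), whose hypotheses for the pair $(b_1,b_2)$ are the two identities $b_1^2b_2=b_1b_2b_1$ and $b_2^2b_1=b_2b_1b_2$. These preliminary observations are all correct, including your identity $a^\pi\xi^\pi=0$ (which the paper never states, but which does follow at once from $a^\pi\xi=a^\pi$ and $\xi^m\xi^\pi=0$), and your point that the mixed terms do not vanish, so Lemma 2.2(2) cannot be bypassed, is well taken.

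The genuine gap sits exactly at the center of gravity of the argument: you never verify $b_1^2b_2=b_1b_2b_1$ and $b_2^2b_1=b_2b_1b_2$; you name them and defer them as ``bookkeeping'' and ``the delicate part.'' That verification is essentially the entire content of the paper's proof, and it is not routine. Besides (2.5)--(2.7) and $b^\pi b^D=0$, the paper has to establish two further identities for precisely this purpose, namely
$$aa^\pi bb^\pi a=a^2a^\pi bb^\pi \qquad\text{and}\qquad bab^\pi a^\pi ba=b^2b^\pi a^2a^\pi,$$
its (2.16) and (2.17); these are what allow the ``trapped'' terms, such as $\xi^Daa^\pi b^\pi a^\pi b\,a\xi\xi^\pi$ arising in $b_1b_2b_1$ and $b^\pi a^\pi b\,a\xi\xi^\pi\,b^\pi a^\pi b$ arising in $b_2b_1b_2$, to be brought to the same normal form as the corresponding terms of $b_1^2b_2$ and $b_2^2b_1$. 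Your pointer to ``the computations of $a_2a_1=0$ \dots\ in Lemma 2.7'' is not an adequate substitute: in Lemma 2.7 no $\xi$-data occurs at all, and the interleaving of $\xi^D,\xi^\pi$ with $b,b^D,b^\pi$ is the new difficulty here. It is also not evident a priori that the eight-term expansions on the two sides of each identity actually match; that they do is a computation, not a formality. Without carrying it out (or at least supplying (2.16)--(2.17) and matching the expansions term by term), the lemma is not proved.
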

\begin{proof} Since $a\xi\xi^\pi$, $\xi^Daa^\pi$ are nilpotent and $a\xi \xi^\pi$ commutes with $\xi^Daa^\pi$, it follows that $b_1=a\xi\xi^\pi+\xi^Daa^\pi$ is nilpotent. By Lemma 2.7, $a_1-a_2=b_2$ is nilpotent.

First we give two useful equalities
$$aa^\pi bb^\pi a=a^\pi ab^\pi (ba) \stackrel{(2.6)}{=}a^\pi (ab)ab^\pi \stackrel{(2.5)}{=}a^2a^\pi bb^\pi\eqno(2.16)$$
and
$$bab^\pi a^\pi b a \stackrel{(2.5)}{=}ba^\pi ab^\pi ba=ba^\pi (abb^\pi)a\stackrel{(2.5)}{=}b(abb^\pi)a^\pi a=(ba)bb^\pi aa^\pi\stackrel{(2.6)}{=}b^2b^\pi a^2a^\pi. \eqno(2.17)$$

According to Lemma 2.5, we have
\begin{eqnarray*}
b_1^2b_2&=&(a\xi \xi^\pi+\xi^Daa^\pi)^2(b^\pi a^\pi b -bb^Daa^\pi)\\
&\stackrel{(2.7)}{=}& (a^2\xi^2\xi^\pi+(\xi^D)^2a^2a^\pi)(b^\pi a^\pi b-bb^Daa^\pi)\\
&=& a^2\xi^2\xi^\pi b^\pi a^\pi b -a^2\xi^2\xi^\pi bb^Daa^\pi+(\xi^D)^2a^2a^\pi b^\pi a^\pi b-(\xi^D)^2a^2a^\pi bb^Daa^\pi\\
&\stackrel{(2.7)}{=}& a^2a^\pi\xi^2\xi^\pi bb^\pi -a^3a^\pi\xi^2\xi^\pi bb^D+(\xi^D)^2a^2a^\pi bb^\pi-(\xi^D)^2a^3a^\pi bb^D.
\end{eqnarray*}

By Lemma 2.4 and Lemma 2.5, we obtain
\begin{eqnarray*}
&&b_1b_2b_1=(a\xi\xi^\pi+\xi^Daa^\pi)(b^\pi a^\pi b -bb^Daa^\pi)(a\xi\xi^\pi+\xi^Daa^\pi)\\
&=&(a\xi\xi^\pi b^\pi a^\pi b -a\xi\xi^\pi bb^Daa^\pi +\xi^Daa^\pi b^\pi a^\pi b-\xi^Da^2a^\pi bb^D)(a\xi\xi^\pi+\xi^Daa^\pi)\\
&=& a\xi\xi^\pi b^\pi a^\pi b a\xi\xi^\pi-a\xi\xi^\pi bb^Daa^\pi a\xi\xi^\pi+\xi^Daa^\pi b^\pi a^\pi b a\xi\xi^\pi-\xi^Da^2a^\pi bb^Da\xi\xi^\pi \\
&&+a\xi\xi^\pi b^\pi a^\pi b \xi^Daa^\pi -a\xi\xi^\pi bb^Daa^\pi \xi^Daa^\pi+\xi^Daa^\pi b^\pi a^\pi b\xi^Daa^\pi- \xi^Da^2a^\pi bb^D\xi^D aa^\pi \\
&\stackrel{(2.7)}{=}& a\xi\xi^\pi a\xi\xi^\pi b^\pi a^\pi b -a\xi\xi^\pi aa^\pi a\xi\xi^\pi bb^D+\xi^Daa^\pi a^\pi a\xi\xi^\pi b^\pi b -\xi^Da^2a^\pi a\xi\xi^\pi bb^D \\
&&+a\xi\xi^\pi a^\pi \xi^Daa^\pi b^\pi b -a\xi\xi^\pi aa^\pi \xi^Daa^\pi bb^D+\xi^Daa^\pi a^\pi \xi^Daa^\pi b^\pi b- \xi^Da^2a^\pi \xi^D aa^\pi bb^D \\
&\stackrel{(2.16)}{=}& a^2a^\pi\xi^2\xi^\pi bb^\pi -a^3a^\pi\xi^2\xi^\pi bb^D+(\xi^D)^2a^2a^\pi bb^\pi-(\xi^D)^2a^3a^\pi bb^D.
\end{eqnarray*}

Hence, $b_1^2b_2=b_1b_2b_1$.

In view of Lemma 2.7 and $b^Db^\pi=0$, we have
\begin{eqnarray*}
b_2^2b_1&=&(b^\pi a^\pi b -bb^Daa^\pi)^2(a\xi \xi^\pi +\xi^Daa^\pi)\\
&\stackrel{(2.6)}{=}&(b^\pi a^\pi b^2b^\pi a^\pi-b^\pi a^\pi b^2b^Daa^\pi +bb^Da^2a^\pi)(a\xi \xi^\pi +\xi^Daa^\pi)\\
&=&b^\pi a^\pi b^2b^\pi aa^\pi\xi\xi^\pi-b^\pi a^\pi b^2b^Daa^\pi a\xi\xi^\pi+b^2b^Da^2a^\pi a\xi\xi^\pi\\
&&+b^\pi a^\pi b^2b^\pi a^\pi\xi^Daa^\pi-b^\pi a^\pi b^2b^Daa^\pi\xi^Daa^\pi+bb^Da^2a^\pi\xi^Daa^\pi\\
&\stackrel{(2.7)}{=}& b^\pi a^\pi b^2b^\pi aa^\pi\xi\xi^\pi-b^\pi a^\pi b^2b^Da^2a^\pi\xi\xi^\pi+b^2b^Da^3a^\pi \xi\xi^\pi\\
&&+b^\pi a^\pi b^2b^\pi aa^\pi\xi^D-b^\pi a^\pi b^2b^Da^2a^\pi\xi^D+bb^Da^3a^\pi\xi^D,
\end{eqnarray*}
and
\begin{eqnarray*}
b_2b_1b_2&=&(b^\pi a^\pi b -bb^Daa^\pi)(a\xi \xi^\pi +\xi^Daa^\pi)(b^\pi a^\pi b -bb^Daa^\pi)\\
&\stackrel{(2.7)}{=}&(b^\pi a^\pi b a\xi \xi^\pi+b^\pi a^\pi b aa^\pi\xi^D -bb^Da^2a^\pi \xi \xi^\pi-bb^Da^2a^\pi\xi^D)(b^\pi a^\pi b -bb^Daa^\pi)\\
&=&b^\pi a^\pi b a\xi \xi^\pi b^\pi a^\pi b +b^\pi a^\pi b aa^\pi\xi^D b^\pi a^\pi b -bb^Da^2a^\pi \xi \xi^\pi b^\pi a^\pi b\\
 &&-bb^Da^2a^\pi\xi^D b^\pi a^\pi b-b^\pi a^\pi b a\xi \xi^\pi bb^Daa^\pi -b^\pi a^\pi ba  a^\pi\xi^D bb^Daa^\pi \\
 &&+bb^Da^2a^\pi \xi\xi^\pi bb^Daa^\pi +bb^Da^2a^\pi \xi^D bb^Daa^\pi\\
&\stackrel{(2.6)}{=}&b^\pi a^\pi b a\xi \xi^\pi b^\pi a^\pi b +b^\pi a^\pi b aa^\pi\xi^D b^\pi a^\pi b -bb^Db^\pi a^2a^\pi \xi \xi^\pi  a^\pi b \\
&&-bb^D b^\pi a^2a^\pi\xi^D a^\pi b-b^\pi a^\pi b a\xi \xi^\pi bb^Daa^\pi -b^\pi a^\pi ba  a^\pi\xi^D bb^Daa^\pi \\
&&+bb^Da^2a^\pi \xi\xi^\pi bb^Daa^\pi +bb^Da^2a^\pi \xi^D bb^Daa^\pi\\
&=& b^\pi a^\pi b a\xi \xi^\pi b^\pi a^\pi b +b^\pi a^\pi b aa^\pi\xi^D b^\pi a^\pi b -b^\pi a^\pi b a\xi \xi^\pi bb^Daa^\pi \\
&&-b^\pi a^\pi ba  a^\pi\xi^D bb^Daa^\pi +bb^Da^2a^\pi \xi\xi^\pi bb^Daa^\pi +bb^Da^2a^\pi \xi^D bb^Daa^\pi\\
&\stackrel{(2.17)}{=}&  b^\pi a^\pi b^2b^\pi aa^\pi\xi\xi^\pi-b^\pi a^\pi b^2b^Da^2a^\pi\xi\xi^\pi+b^2b^Da^3a^\pi \xi\xi^\pi\\
&&+b^\pi a^\pi b^2b^\pi aa^\pi\xi^D-b^\pi a^\pi b^2b^Da^2a^\pi\xi^D+bb^Da^3a^\pi\xi^D.
\end{eqnarray*}

Therefore, $b_2^2b_1=b_2b_1b_2$.

By Lemma 2.2(2), it follows that $b_1+b_2$ is nilpotent.
\end{proof}
\section{  \bf Main results }
~~~In this section, we consider the formulae on the Drazin inverses of the product and sum of two elements of $R$.
\begin{theorem} Let $a,b \in R^D$ with $a^2b=aba$ and $b^2a=bab$. Then $ab\in R^D$ and
\begin{center}
$(ab)^D=a^Db^D$.
\end{center}
\end{theorem}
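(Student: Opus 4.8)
The plan is to show directly that $x=a^Db^D$ satisfies the three defining conditions (1.1) for the Drazin inverse of $ab$: namely $x\in{\rm comm}(ab)$, $x(ab)x=x$, and $(ab)^{k}=(ab)^{k+1}x$ for some positive integer $k$. All three can be verified by pushing $a^D$ and $b^D$ around using the commutation relations collected in Lemmas 2.3 and 2.4 together with the power identity $(ab)^i=a^ib^i$ of Lemma 2.1(2); I do not expect to need any fresh nilpotency argument.

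For the commutation $x\in{\rm comm}(ab)$, I would compute the two products $(a^Db^D)(ab)$ and $(ab)(a^Db^D)$ and check that each collapses to the single element $aa^Dbb^D$. On one side, since $a^Db^D\in{\rm comm}(a)$ by Lemma 2.4(1), one has $(a^Db^D)(ab)=(a^Db^Da)b=(aa^Db^D)b=aa^Db^Db=aa^Dbb^D$, using $b^Db=bb^D$. On the other side, since $ab\in{\rm comm}(a)$ and hence $ab\in{\rm comm}(a^D)$, one has $(ab)(a^Db^D)=(aba^D)b^D=(a^Dab)b^D=a^Dabb^D=aa^Dbb^D$. Thus $ab$ and $a^Db^D$ commute, both products equalling $aa^Dbb^D$. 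The condition $x(ab)x=x$ then follows immediately: $(a^Db^D)(ab)(a^Db^D)=(a^Db^D)(aa^Dbb^D)=aa^D(a^Db^D)bb^D=(aa^Da^D)(b^Dbb^D)=a^Db^D$, where I again use $a^Db^D\in{\rm comm}(a)$ to move $aa^D$ to the front and then the standard identities $aa^Da^D=a^D$ and $b^Dbb^D=b^D$.

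The remaining, and I expect most delicate, task is the third condition. Fix an integer $m>\max\{{\rm ind}(a),{\rm ind}(b)\}$, so that $a^{m-1}=a^ma^D$ and $b^{m-1}=b^mb^D$. Using $(ab)^{m+1}=a^{m+1}b^{m+1}$ from Lemma 2.1(2), the goal is to reduce $(ab)^{m+1}a^Db^D=a^{m+1}b^{m+1}a^Db^D$ to $a^mb^m=(ab)^m$. The crux is that $a^D$ cannot be freely commuted past powers of $b$; instead I would slide it through using $ba^D\in{\rm comm}(b)$ (Lemma 2.4(2)), writing $b^{m+1}a^D=ba^Db^m$ and hence $b^{m+1}a^Db^D=ba^D(b^mb^D)=ba^Db^{m-1}$. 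Then, applying $a^{m+1}b=aba^m$ from Lemma 2.1(1) and $a^ma^D=a^{m-1}$ gives $a^{m+1}ba^D=aba^{m-1}$, so that $a^{m+1}b^{m+1}a^Db^D=aba^{m-1}b^{m-1}$; finally $ab\in{\rm comm}(a)$ lets me move $ab$ past $a^{m-1}$ to obtain $a^{m-1}(ab)b^{m-1}=a^mb^m$. This yields $(ab)^{m+1}a^Db^D=(ab)^m$, and together with the first two conditions it shows $ab\in R^D$ with $(ab)^D=a^Db^D$ (and ${\rm ind}(ab)\le m$).

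The main obstacle throughout is the ordering bookkeeping forced by the fact that $a$ and $b$ (equivalently $aa^D$ and $b$) need not commute: every manipulation must be routed through the weak-commutativity relations $a^Db^D,\,ab\in{\rm comm}(a)$ and $ba^D\in{\rm comm}(b)$ rather than through genuine commutativity. The place where the hypotheses $a^2b=aba$ and $b^2a=bab$ are truly essential is the third condition, where sliding $a^D$ leftward past $b^{m}$ via $ba^D\in{\rm comm}(b)$ must be combined with the Drazin power-stabilization $b^mb^D=b^{m-1}$. If one instead tries to prove nilpotency of $ab-(ab)^2a^Db^D$ directly, the computation $(ab)^2a^Db^D=a^2a^Db^2b^D$ leads to the element $abb^\pi+aa^\pi b-aa^\pi bb^\pi$, whose nilpotency is considerably harder to see; the power computation above is what circumvents that difficulty.
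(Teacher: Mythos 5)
Your proof is correct and takes essentially the same approach as the paper: a direct verification of the three defining conditions for $x=a^Db^D$ (commutation, $x(ab)x=x$, and the power condition) using the weak-commutation relations of Lemmas 2.3--2.4 and the identity $(ab)^i=a^ib^i$. The only cosmetic difference is in the power condition, where you slide $a^D$ through $b^{m+1}$ via $ba^D\in{\rm comm}(b)$, while the paper pulls $a^D$ to the front at once using $ab\in{\rm comm}(a^D)$; both computations are valid.
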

\begin{proof} Let $x=a^Db^D$. We prove that $x$ is the Drazin inverse of $ab$ by showing the following results:
(1) $(ab)x=x(ab)$; (2) $ x(ab)x=x$; (3) $(ab)^k=(ab)^{k+1}x$ for some positive integer $k$.

(1) Note that $a(ab)=(ab)a$ implies $(ab)a^D=a^D(ab)$. It follows that
\begin{eqnarray*}
 (ab)x &=& (ab)a^Db^D\stackrel{(2.5)}{=}a^Dabb^D \\
   &=& a(a^Db^D)b\stackrel{(2.5)}{=} a^Db^Dab \\
   &=& x(ab).
\end{eqnarray*}

$(2)$ We calculate directly that
\begin{eqnarray*}
  x(ab)x &=& (a^Db^D)aba^Db^D\stackrel{(2.5)}{=}a(a^Db^D)ba^Db^D=aa^Db(b^Da^D)b^D \\
   &=& aa^D(b^Da^D)bb^D=a(a^Db^D)a^Dbb^D \stackrel{(2.5)}{=}aa^D(a^Db^D)bb^D\\
   &=& x.
\end{eqnarray*}

$(3)$ Take $k={\rm max}\{{\rm ind}(a),{\rm ind}(b)\}$. Then
\begin{eqnarray*}
  (ab)^{k+1}x&=& (ab)^{k+1}a^Db^D=a^D(ab)^{k+1}b^D \\
   &\stackrel{(2.2)}{=}& a^Da^{k+1}b^{k+1}b^D=a^kb^k\\
   &\stackrel{(2.2)}{=}& (ab)^k.
\end{eqnarray*}

Hence, $(ab)^D=a^Db^D$.
\end{proof}
\begin{remark}
{\rm In [16, Lemma 2], for $a,b \in R^D$ with $ab=ba$, it is proved that $(ab)^D=a^Db^D=b^Da^D$. However, in Theorem 3.1, $(ab)^D=a^Db^D \neq b^Da^D$. Such as, take
$a=\left(
          \begin{array}{cc}
            1 & 0 \\
            1 & 0 \\
          \end{array}
        \right)
$ and $b=\left(
           \begin{array}{cc}
             1 & 0 \\
             0 & 0 \\
           \end{array}
         \right)
$. By calculations, we obtain $a^2b=aba$ and $b^2=bab$, but $a^Db^D \neq b^Da^D$}.
\end{remark}

\begin{theorem} Let $a,b \in R^D$. If $a^2b=aba$, $b^2a=bab$ and ${\rm ind}(a)=s$, then $a+b$ is Drazin invertible if and only if  $1+a^Db$ is Drazin invertible. Moreover, we have
\begin{eqnarray*}
(a+b)^D &=& a^D(1+a^Db)^D+a^\pi b[a^D(1+a^Db)^D]^2+\sum _{i=0}^{s-1}(b^D)^{i+1}(-a)^ia^\pi \\
 && +b^\pi a \sum_{i=0}^{s-2}(i+1)(b^D)^{i+2}(-a)^ia^\pi,
\end{eqnarray*}
and $(1+a^Db)^D=a^\pi +a^2a^D(a+b)^D$.
\end{theorem}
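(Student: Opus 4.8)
Writing $\xi=1+a^{D}b$, the whole argument rests on the two identities
$$a+b=a\xi+a^{\pi}b,\qquad \xi=a^{D}(a+b)+a^{\pi},$$
both immediate from $a^{D}a=aa^{D}=1-a^{\pi}$. Since $a,a^{D}\in\mathrm{comm}(\xi)$ by (2.7), the element $a\xi$ is a product of two commuting Drazin invertible elements whenever $\xi\in R^{D}$, so $a\xi\in R^{D}$ with $(a\xi)^{D}=a^{D}\xi^{D}$; this produces the leading summand $a^{D}(1+a^{D}b)^{D}$ of the formula. I would prove the two implications separately, each time testing a candidate inverse $x$ of $y$ against the defining conditions in the form (1.2): $x\in\mathrm{comm}(y)$, $xyx=x$, and $y-y^{2}x\in R^{\mathrm{nil}}$.

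For the implication $\xi\in R^{D}\Rightarrow a+b\in R^{D}$, I would let $x$ be the displayed four-term expression and take $y=a+b$. The conditions $x\in\mathrm{comm}(a+b)$ and $x(a+b)x=x$ are direct, if lengthy, reductions: every monomial that arises is normalised by shifting $a^{D}$- and $b^{D}$-factors through (2.5)--(2.6), the interchange relations (2.3)--(2.4), and the power identities (2.8)--(2.13), together with $a^{\pi}aa^{D}=0$. The bookkeeping is organised by the meaning of the four summands: $a^{D}\xi^{D}$ inverts on the corner cut out by $aa^{D}$, the sum $\sum_{i=0}^{s-1}(b^{D})^{i+1}(-a)^{i}a^{\pi}$ is a Neumann-type Drazin inverse on the complementary corner cut out by $a^{\pi}$ (where $a$ is nilpotent of index $s$), and the remaining two terms are the off-diagonal corrections.

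The main obstacle is the third condition, the nilpotency of $(a+b)-(a+b)^{2}x$. I expect a direct computation to identify this remainder, up to sign, with the element $b_{1}+b_{2}$ of Lemma 2.8, where $b_{1}=a\xi\xi^{\pi}+\xi^{D}aa^{\pi}$ gathers the nilpotent diagonal defects coming from the nilpotent parts $\xi\xi^{\pi}$ and $aa^{\pi}$, while $b_{2}=b^{\pi}a^{\pi}b-bb^{D}aa^{\pi}$ gathers the off-diagonal defect. Lemmas 2.7 and 2.8 are built for exactly this purpose: they establish that $b^{\pi}a^{\pi}b$ and $bb^{D}aa^{\pi}$ are nilpotent and that $b_{1},b_{2}$ satisfy $b_{1}^{2}b_{2}=b_{1}b_{2}b_{1}$ and $b_{2}^{2}b_{1}=b_{2}b_{1}b_{2}$, whence $b_{1}+b_{2}\in R^{\mathrm{nil}}$ by the quasi-commuting nilpotents statement of Lemma 2.2(2). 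Matching the remainder to $\pm(b_{1}+b_{2})$ is the delicate step, and it is precisely where the hypotheses $a^{2}b=aba$ and $b^{2}a=bab$ are indispensable.

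For the converse $a+b\in R^{D}\Rightarrow\xi\in R^{D}$, I would exhibit the inverse directly by setting $\eta=a^{\pi}+a^{2}a^{D}(a+b)^{D}$ and proving $\eta=\xi^{D}$ through the same three conditions. Using $a^{2}a^{D}a^{\pi}=0$ and the explicit form of $(a+b)^{D}$ obtained in the first part, the verifications of $\eta\xi\eta=\eta$ and of the nilpotency of $\xi-\xi^{2}\eta$ collapse to the corner computations already controlled by Lemmas 2.7--2.8. The delicate point here is the commutation $\eta\xi=\xi\eta$: since $\xi$ need \emph{not} commute with $a+b$ (indeed $\xi b-b\xi=a^{D}b^{2}-ba^{D}b$, which is generally nonzero, as the matrices of Remark 3.2 show), one cannot pass $\xi$ through $(a+b)^{D}$ by the usual principle that commutants survive Drazin inversion. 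What saves the argument is the identity $a^{2}a^{D}(a^{D}b^{2}-ba^{D}b)=0$, which I would derive from (2.4) and the consequence $aa^{D}b=aba^{D}$ of $ab\in\mathrm{comm}(a^{D})$; it forces $a^{2}a^{D}(a+b)^{D}$ to commute with $\xi$ although $(a+b)^{D}$ itself does not. Once $\eta=\xi^{D}$ is established, the equivalence and the closing formula $(1+a^{D}b)^{D}=a^{\pi}+a^{2}a^{D}(a+b)^{D}$ follow simultaneously.
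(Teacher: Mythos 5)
Your first implication ($\xi=1+a^Db\in R^D \Rightarrow a+b\in R^D$) is exactly the paper's plan: verify the three conditions of (1.2) for the displayed four-term candidate, and identify the remainder $(a+b)-(a+b)^2x$ with the element $b_1+b_2$ of Lemma 2.8 (it equals $b_1+b_2$ on the nose, no sign needed), whose nilpotency finishes the argument. As a sketch this half is sound, though all the heavy computation is deferred.

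The converse is where there is a genuine gap, and in fact your plan is circular. You propose to verify that $\eta=a^\pi+a^2a^D(a+b)^D$ equals $\xi^D$ ``using the explicit form of $(a+b)^D$ obtained in the first part'' and ``the corner computations controlled by Lemmas 2.7--2.8.'' But the four-term formula for $(a+b)^D$ is established only under the hypothesis $\xi\in R^D$, and Lemma 2.8 likewise assumes $\xi\in R^D$ (its statement involves $\xi^D$ and $\xi^\pi$); in this direction that is precisely what you do not yet know --- only the bare existence of $(a+b)^D$ is available. Your proposed rescue of the commutation $\eta\xi=\xi\eta$ has the same defect in subtler form: the identity $a^2a^D(a^Db^2-ba^Db)=0$ is correct, but it is an identity in $a$ and $b$, and to convert it into a statement about $(a+b)^D$ you must pass some element through $(a+b)^D$; the only general tool for that is [7, Theorem 1], which requires commutation with $a+b$ itself --- and, as you yourself observe, $\xi$ does not commute with $a+b$ (nor does $a^2a^D$). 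No bridge is supplied. The paper's proof of this direction is entirely different and avoids all of this: write $\xi=a^\pi+a^D(a+b)$, observe that the pair $(a^D,\,a+b)$ satisfies the twisted hypotheses $(a^D)^2(a+b)=a^D(a+b)a^D$ and $(a+b)^2a^D=(a+b)a^D(a+b)$, so the already-proved Theorem 3.1 yields $a^D(a+b)\in R^D$ with $[a^D(a+b)]^D=(a^D)^D(a+b)^D=a^2a^D(a+b)^D$; then check that the two summands are orthogonal, $a^\pi\cdot a^D(a+b)=a^D(a+b)\cdot a^\pi=0$ (using that $a^Db$ commutes with $aa^D$), and invoke Drazin's additive result [7, Corollary 1] to conclude $\xi^D=a^\pi+a^2a^D(a+b)^D$ in one stroke; the commutation of $\eta$ with $\xi$ then comes for free. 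If you wish to keep a direct verification, you must first prove --- without assuming $\xi\in R^D$ --- that $a^2a^D(a+b)^D$ is the Drazin inverse of $a^D(a+b)$; at that point you will have reconstructed the paper's argument.
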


\begin{proof} Suppose that $a+b$ is Drazin invertible. We prove that $1+a^Db$ is Drazin invertible. Write $1+a^Db=a_1+b_1$, where $a_1=a^\pi$, $b_1=a^D(a+b)$.

Note that $(a^D)^2(a+b)=a^D(a+b)a^D$ and $(a+b)^2a^D=(a+b)a^D(a+b)$. By Theorem 3.1, it follows that $a^D(a+b)=b_1$ is Drazin invertible and
\begin{center}
$(b_1)^D=[a^D(a+b)]^D=(a^D)^D(a+b)^D=a^2a^D(a+b)^D$.
\end{center}

 From Lemma 2.4, we obtain that $a^Db$ commutes with $aa^D$. Hence, $a^D(a+b)\in {\rm comm}(a^\pi)$ and $a_1b_1=b_1a_1=0$. By [7, Corollary 1], it follows that $(1+a^Db)^D=a^\pi +a^2a^D(a+b)^D$.

Conversely, let $\xi=1+a^Db$ be Drazin invertible and
\begin{eqnarray*}
x &=& a^D\xi^D+a^\pi b(a^D\xi^D)^2+\sum _{i=0}^{s-1}(b^D)^{i+1}(-a)^ia^\pi+b^\pi a \sum_{i=0}^{s-2}(i+1)(b^D)^{i+2}(-a)^ia^\pi\\
&=& x_1+x_2,
\end{eqnarray*}
where $x_1= a^D\xi^D+a^\pi b(a^D\xi^D)^2$, $x_2=\displaystyle{\sum _{i=0}^{s-1}(b^D)^{i+1}(-a)^ia^\pi +b^\pi a \sum_{i=0}^{s-2}(i+1)(b^D)^{i+2}(-a)^ia^\pi}$.

According to $a^D\in {\rm comm}(\xi^D)$ and $(ba^D)^i=b^i(a^D)^i$, we have
\begin{eqnarray*}
(a+b)a^\pi b (a^D)^2&=&(a+b)(1-aa^D)b(a^D)^2=(a-a^2a^D+b-baa^D)b(a^D)^2\\
&=& ab(a^D)^2-a^2a^Db(a^D)^2+b^2(a^D)^2-baa^Db(a^D)^2\\
&=& (a^D)^2ab-a^2(a^D)^2a^Db+b^2(a^D)^2-ba(a^D)^2a^Db\\
&=& b^2(a^D)^2-b(a^D)^2b\\
&\stackrel{(2.6)}{=}& b^2(a^D)^2-(ba^D)^2\\
&\stackrel{(2.11)}{=}&0.
\end{eqnarray*}

Thus, $(a+b)a^\pi b(a^D\xi^D)^2=0$.

Similarly, $(a+b)b^\pi a(b^D)^2=0$. Hence, $(a+b)b^\pi a \displaystyle{\sum_{i=0}^{s-2}(i+1)}(b^D)^{i+2}(-a)^ia^\pi =0$.

Next, we show that $x$ is the Drazin inverse of $(a+b)$ in 3 steps.

Step 1. First we prove that $x(a+b)=(a+b)x$. Put $y_1=(a+b)a^D\xi^D$ and $y_2=(a+b)\displaystyle{\sum_{i=0}^{s-1}(b^D)^{i+1}(-a)^ia^\pi}$.
Then we have
\begin{eqnarray*}
(a+b)x &=&(a+b) [a^D \xi^D+a^\pi b(a^D \xi^D)^2+\sum _{i=0}^{s-1}(b^D)^{i+1}(-a)^ia^\pi \\
 && +b^\pi a \sum_{i=0}^{s-2}(i+1)(b^D)^{i+2}(-a)^ia^\pi]\\
 &=&(a+b)[a^D \xi^D + \sum _{i=0}^{s-1}(b^D)^{i+1}(-a)^ia^\pi]\\
 &=& y_1+y_2.
\end{eqnarray*}

Second we show that $x_1(a+b)=y_1$ and $x_2(a+b)=y_2$.

Since $a^D\xi^D=\xi^Da^D$, we get
\begin{eqnarray*}
x_1(a+b) &=& [a^D\xi^D+a^\pi b(a^D(\xi^D)^2](a+b)\\
&\stackrel{(2.5)}{=}& a^D(a+b)\xi^D+ a^\pi b(a^D)^2(a+b)(\xi^D)^2\\
&=& a^D(a+b)\xi^D+(a^\pi ba^D+a^\pi ba^Da^Db)(\xi^D)^2\\
&=& a^D(a+b)\xi^D+a^\pi ba^D\xi(\xi^D)^2\\
&=& a^D(a+b)\xi^D +a^\pi ba^D\xi^D\\
&=& (a^Da+a^Db+ba^D-aa^Dba^D)\xi^D\\
&=& (a+b)a^D \xi^D\\
&=& y_1.
\end{eqnarray*}

By induction,
\begin{eqnarray}
bb^D(ab^D)^i&=&(bb^Dab^D)^i=(b^Da)^i.
\end{eqnarray}

Note that $a^sa^\pi=0$ and
\begin{eqnarray}
\nonumber
b^Da^\pi b &=& b^D(1-aa^D)b=bb^D-(b^Da^D)ba\stackrel{(2.6)}{=}bb^D-bb^Da^Da  \\
&=& bb^Da^\pi.
\end{eqnarray}

We have
\begin{eqnarray*}
&&x_2(a+b)-y_2=[\sum _{i=0}^{s-1}(b^D)^{i+1}(-a)^ia^\pi +b^\pi a \sum_{i=0}^{s-2}(i+1)(b^D)^{i+2}(-a)^ia^\pi](a+b)\\
&&-(a+b)\sum_{i=0}^{s-1}(b^D)^{i+1}(-a)^ia^\pi\\
&=&-\sum _{i=0}^{s-1}(b^D)^{i+1}(-a)^{i+1}a^\pi+\sum_{i=0}^{s-1}(b^D)^{i+1}(-a)^ia^\pi b-b^\pi a\sum_{i=0}^{s-2}(i+1)(b^D)^{i+2}(-a)^{i+1}a^\pi\\
&&+b^\pi a\sum_{i=0}^{s-2}(i+1)(b^D)^{i+2}(-a)^ia^\pi b-(a+b)\sum_{i=0}^{s-1}(b^D)^{i+1}(-a)^ia^\pi\\
&=&-\sum_{i=0}^{s-1}(-b^Da)^{i+1}a^\pi +\sum_{i=0}^{s-1}b^D(-b^Da)^ia^\pi b-b^\pi ab^D\sum_{i=0}^{s-2}(i+1)(-b^Da)^ib^Da^\pi b\\
&&+b^\pi a(b^D)^2\sum_{i=0}^{s-2}(i+1)(-b^Da)^ia^\pi b-a\sum_{i=0}^{s-1}(b^D)^{i+1}(-a)^ia^\pi-b\sum_{i=0}^{s-1}(b^D)^{i+1}(-a)^ia^\pi\\
&\stackrel{(3.2)}{=}&-\sum_{i=0}^{s-1}(-b^Da)^{i+1}a^\pi+\sum_{i=0}^{s-1}(-b^Da)^ibb^Da^\pi+b^\pi \sum_{i=0}^{s-2}(i+1)(-ab^D)^{i+2}a^\pi\\
&& -b^\pi\sum_{i=0}^{s-2}(i+1)(-ab^D)^{i+1}bb^Da^\pi +\sum_{i=0}^{s-1}(-ab^D)^{i+1}a^\pi -\sum_{i=0}^{s-1}(-b^Da)^ibb^Da^\pi\\
&=& -\sum_{i=0}^{s-1}(-b^Da)^{i+1}a^\pi +\sum_{i=0}^{s-1}(-ab^D)^{i+1}a^\pi +b^\pi[\sum_{i=0}^{s-2}(i+1)(-ab^D)^{i+2}a^\pi\\
&&-\sum_{i=0}^{s-2}(i+1)(-ab^D)^{i+1}a^\pi]\\
&=& -\sum_{i=0}^{s-1}(-b^Da)^{i+1}a^\pi +\sum_{i=0}^{s-1}(-ab^D)^{i+1}a^\pi -b^\pi \sum_{i=1}^{s-1}(-ab^D)^ia^\pi\\
&=& -\sum_{i=0}^{s-1}(-b^Da)^{i+1}a^\pi +bb^D\sum_{i=1}^{s-1}(-ab^D)^ia^\pi\\
&\stackrel{(3.1)}{=}&  -\sum_{i=0}^{s-1}(-b^Da)^{i+1}a^\pi +\sum_{i=1}^{s-1}(-bb^Dab^D)^ia^\pi\\
&=& -\sum_{i=0}^{s-1}(-b^Da)^{i+1}a^\pi +\sum_{i=1}^{s-1}(-b^Da)^ia^\pi\\
&=& -\sum_{i=1}^s(-b^Da)^ia^\pi +\sum_{i=1}^{s-1}(-b^Da)^ia^\pi\\
&=& -\sum_{i=1}^{s-1}(-b^Da)^ia^\pi -(-b^Da)^sa^\pi +\sum_{i=1}^{s-1}(-b^Da)^ia^\pi\\
&\stackrel{(2.13)}{=}&-\sum_{i=1}^{s-1}(-b^Da)^ia^\pi -(-b^D)^sa^sa^\pi +\sum_{i=1}^{s-1}(-b^Da)^ia^\pi\\
&=&0.
\end{eqnarray*}

Hence, $x_2(a+b)=y_2$. It follows that $x(a+b)=(a+b)x$.

Step 2. We show that $x(a+b)x=x$. Note that $a+b=a\xi+a^\pi b$. We have
\begin{eqnarray*}
&&x(a+b)x = x(a+b)[a^D\xi^D+\sum_{i=0}^{s-1}(b^D)^{i+1}(-a)^ia^\pi]\\
&=& (a+b)[a^D\xi^D+\sum_{i=0}^{s-1}(b^D)^{i+1}(-a)^ia^\pi][a^D\xi^D+\sum_{i=0}^{s-1}(b^D)^{i+1}(-a)^ia^\pi]\\
&=&(a+b)(a^D\xi^D)^2+(a+b)a^D\xi^D\sum_{i=0}^{s-1}(b^D)^{i+1}(-a)^ia^\pi\\
&&+(a+b)\sum_{i=0}^{s-1}(b^D)^{i+1}(-a)^ia^\pi \sum_{i=0}^{s-1}(b^D)^{i+1}(-a)^ia^\pi\\
&=&z_1+z_2+z_3,
\end{eqnarray*}
where $z_1=(a+b)(a^D\xi^D)^2$, $z_2=(a+b)a^D\xi^D\displaystyle{\sum_{i=0}^{s-1}(b^D)^{i+1}(-a)^ia^\pi}$
 and $$z_3=(a+b)\displaystyle{\sum_{i=0}^{s-1}(b^D)^{i+1}(-a)^ia^\pi \sum_{i=0}^{s-1}(b^D)^{i+1}(-a)^ia^\pi}.$$

Now we prove that $z_1+z_2+z_3=x$.

Firstly, we have
\begin{eqnarray*}
z_1&=& (a+b)(a^D\xi^D)^2=(a\xi+a^\pi b)(a^D\xi^D)^2\\
 &=& a\xi(a^D\xi^D)^2+a^\pi b(a^D\xi^D)^2\\
&\stackrel{(2.5)}{=}&a^D\xi^D+ a^\pi b(a^D\xi^D)^2,
\end{eqnarray*}
\begin{eqnarray*}
z_2&=& (a+b)a^D\xi^D\sum_{i=0}^{s-1}(b^D)^{i+1}(-a)^ia^\pi\\
 &=& \xi^D aa^D\sum_{i=0}^{s-1}(b^D)^{i+1}(-a)^ia^\pi +b\xi^Da^D \sum_{i=0}^{s-1}(b^D)^{i+1}(-a)^ia^\pi\\
&=&\xi^D \sum_{i=0}^{s-1}a^Da b^D(-b^Da)^ia^\pi +b\xi^D \sum_{i=0}^{s-1}(a^D)^2ab^D(-b^Da)^ia^\pi\\
&\stackrel{(2.9)}{=}&-\xi^D\sum_{i=0}^{s-1}a^D(-ab^D)^{i+1}a^\pi -b\xi^D\sum_{i=0}^{s-1}(a^D)^2(-ab^D)^{i+1}a^\pi\\
&\stackrel{(2.5)}{=}& -\xi^D\sum_{i=0}^{s-1}(-ab^D)^{i+1}a^Da^\pi -b\xi^D\sum_{i=0}^{s-1}(-ab^D)^{i+1}(a^D)^2a^\pi\\
&=& 0.
\end{eqnarray*}

Secondly, we show that
\begin{eqnarray}
z_3&=& \sum _{i=0}^{s-1}(b^D)^{i+1}(-a)^ia^\pi+b^\pi a \sum_{i=0}^{s-2}(i+1)(b^D)^{i+2}(-a)^ia^\pi.
\end{eqnarray}

Indeed, we have
\begin{eqnarray*}
&&z_3= (a+b)\sum_{i=0}^{s-1}(b^D)^{i+1}(-a)^ia^\pi \sum_{i=0}^{s-1}(b^D)^{i+1}(-a)^ia^\pi\\
&=& b\sum_{i=0}^{s-1}(b^D)^{i+1}(-a)^ia^\pi \sum_{i=0}^{s-1}(b^D)^{i+1}(-a)^ia^\pi +a\sum_{i=0}^{s-1}(b^D)^{i+1}(-a)^ia^\pi \sum_{i=0}^{s-1}(b^D)^{i+1}(-a)^ia^\pi\\
&=&[bb^Da^\pi +\sum_{i=1}^{s-1}(-b^Da)^ia^\pi]\sum_{i=0}^{s-1}(b^D)^{i+1}(-a)^ia^\pi +a\sum_{i=0}^{s-1}(b^D)^{i+1}(-a)^ia^\pi \sum_{i=0}^{s-1}(b^D)^{i+1}(-a)^ia^\pi\\
&=&bb^D\sum_{i=0}^{s-1}(b^D)^{i+1}(-a)^ia^\pi-bb^Daa^D\sum_{i=0}^{s-1}(b^D)^{i+1}(-a)^ia^\pi
+\sum_{i=1}^{s-1}(-b^Da)^ia^\pi\sum_{i=0}^{s-1}(b^D)^{i+1}(-a)^ia^\pi\\
&&+ a\sum_{i=0}^{s-1}(b^D)^{i+1}(-a)^ia^\pi \sum_{i=0}^{s-1}(b^D)^{i+1}(-a)^ia^\pi\\
&=& \sum_{i=0}^{s-1}(b^D)^{i+1}(-a)^ia^\pi-bb^Daa^D\sum_{i=0}^{s-1}(b^D)^{i+1}(-a)^ia^\pi
+\sum_{i=1}^{s-1}(-b^Da)^ia^\pi\sum_{i=0}^{s-1}(b^D)^{i+1}(-a)^ia^\pi\\
&&+ a\sum_{i=0}^{s-1}(b^D)^{i+1}(-a)^ia^\pi \sum_{i=0}^{s-1}(b^D)^{i+1}(-a)^ia^\pi\\
&=&  \sum_{i=0}^{s-1}(b^D)^{i+1}(-a)^ia^\pi+bb^D\sum_{i=0}^{s-1}(-ab^D)^{i+1}a^Da^\pi
+\sum_{i=1}^{s-1}(-b^Da)^ia^\pi\sum_{i=0}^{s-1}(b^D)^{i+1}(-a)^ia^\pi\\
&&+ a\sum_{i=0}^{s-1}(b^D)^{i+1}(-a)^ia^\pi \sum_{i=0}^{s-1}(b^D)^{i+1}(-a)^ia^\pi\\
&=& \sum_{i=0}^{s-1}(b^D)^{i+1}(-a)^ia^\pi+\sum_{i=1}^{s-1}(-b^Da)^ia^\pi\sum_{i=0}^{s-1}(b^D)^{i+1}(-a)^ia^\pi\\
&&+a\sum_{i=0}^{s-1}(b^D)^{i+1}(-a)^ia^\pi \sum_{i=0}^{s-1}(b^D)^{i+1}(-a)^ia^\pi\\
&=& \sum_{i=0}^{s-1}(b^D)^{i+1}(-a)^ia^\pi +m_1+m_2,
\end{eqnarray*}
where $$m_1=\sum_{i=1}^{s-1}(-b^Da)^ia^\pi\sum_{i=0}^{s-1}(b^D)^{i+1}(-a)^ia^\pi,$$
$$m_2=a\sum_{i=0}^{s-1}(b^D)^{i+1}(-a)^ia^\pi \sum_{i=0}^{s-1}(b^D)^{i+1}(-a)^ia^\pi.$$

In view of the equality $(3.3)$, it is enough to prove that
\begin{eqnarray*}
m_1+m_2&=&b^\pi a \sum_{i=0}^{s-2}(i+1)(b^D)^{i+2}(-a)^ia^\pi.
\end{eqnarray*}

Since
\begin{eqnarray*}
&&b^\pi a \sum_{i=0}^{s-2}(i+1)(b^D)^{i+2}(-a)^ia^\pi = (1-bb^D)a \sum_{i=0}^{s-2}(i+1)(b^D)^{i+2}(-a)^ia^\pi\\
&=&a \sum_{i=0}^{s-2}(i+1)(b^D)^{i+2}(-a)^ia^\pi-bb^Da \sum_{i=0}^{s-2}(i+1)(b^D)^{i+2}(-a)^ia^\pi,
\end{eqnarray*}
we only need to show $$m_1=-bb^Da \sum_{i=0}^{s-2}(i+1)(b^D)^{i+2}(-a)^ia^\pi,$$
$$m_2=a \sum_{i=0}^{s-2}(i+1)(b^D)^{i+2}(-a)^ia^\pi.$$

Since $a^sa^\pi=0$ and $bb^D$ commutes with $b^Da$, we get
\begin{eqnarray*}
m_1&=&\sum_{i=1}^{s-1}(-b^Da)^ia^\pi\sum_{i=0}^{s-1}(b^D)^{i+1}(-a)^ia^\pi= \sum_{i=1}^s(-b^Da)^ia^\pi\sum_{i=0}^{s-1}(b^D)^{i+1}(-a)^ia^\pi\\
&\stackrel{(2.6)}{=}& \sum_{i=1}^s(-bb^Db^Da)^ia^\pi\sum_{i=0}^{s-1}(-b^Da)^ib^Da^\pi=bb^D\sum_{i=1}^s(-b^Da)^ia^\pi\sum_{i=0}^{s-1}(-b^Da)^ib^Da^\pi\\
&=& -bb^D\sum_{i=1}^s(-b^Da)^{i-1}b^Daa^\pi\sum_{i=0}^{s-1}(-b^Da)^ib^Da^\pi\\
&\stackrel{(2.6)}{=}&-bb^D\sum_{i=0}^{s-1}(-b^Da)^i\sum_{i=0}^{s-1}(-b^Da)^i(b^Daa^\pi)b^Da^\pi\\
&=&-bb^D\sum_{i=0}^{s-1}(-b^Da)^i\sum_{i=0}^{s-1}(-b^Da)^ib^D(b^Daa^\pi)a^\pi\\
&=&-bb^D\sum_{i=0}^{s-1}(-b^Da)^i\sum_{i=0}^{s-1}(-b^Da)^i(b^D)^2aa^\pi\\
&\stackrel{(2.6)}{=}& -bb^D(b^D)^2a\sum_{i=0}^{s-1}(-b^Da)^i\sum_{i=0}^{s-1}(-b^Da)^ia^\pi\\
&\stackrel{(2.6)}{=} &-b(b^D)^2ab^D\sum_{i=0}^{s-1}(-b^Da)^i\sum_{i=0}^{s-1}(-b^Da)^ia^\pi\\
&\stackrel{(2.6)}{=}&-bb^Da(b^D)^2\sum_{i=0}^{s-1}(-b^Da)^i\sum_{i=0}^{s-1}(-b^Da)^ia^\pi \\
&=&-bb^Da(b^D)^2\sum_{i=0}^{s-2}(i+1)(-b^Da)^ia^\pi\\
&\stackrel{(2.13)}{=}& -bb^Da\sum_{i=0}^{s-2}(i+1)(b^D)^{i+2}(-a)^ia^\pi.
\end{eqnarray*}

Similarly,
\begin{eqnarray*}
m_2&=&a\sum_{i=0}^{s-1}(b^D)^{i+1}(-a)^ia^\pi \sum_{i=0}^{s-1}(b^D)^{i+1}(-a)^ia^\pi \stackrel{(2.6)}{=} a\sum_{i=0}^{s-1}(-b^Da)^ib^Da^\pi \sum_{i=0}^{s-1}(-b^Da)^ib^Da^\pi\\
&\stackrel{(2.6)}{=}&a\sum_{i=0}^{s-1}(-b^Da)^i\sum_{i=0}^{s-1}(-b^Da)^ib^Da^\pi b^Da^\pi\stackrel{(2.6)}{=}a\sum_{i=0}^{s-1}(-b^Da)^i\sum_{i=0}^{s-1}(-b^Da)^i b^Db^Da^\pi a^\pi\\
&=&a\sum_{i=0}^{s-1}(-b^Da)^i\sum_{i=0}^{s-1}(-b^Da)^i (b^D)^2 a^\pi\stackrel{(2.6)}{=}a(b^D)^2\sum_{i=0}^{s-1}(-b^Da)^i\sum_{i=0}^{s-1}(-b^Da)^i  a^\pi\\
&=& a(b^D)^2\sum_{i=0}^{s-2}(i+1)(-b^Da)^i  a^\pi\stackrel{(2.13)}{=} a(b^D)^2\sum_{i=0}^{s-2}(i+1)(b^D)^i(-a)^i a^\pi\\
&=&  a\sum_{i=0}^{s-2}(i+1)(b^D)^{i+2}(-a)^i a^\pi.
\end{eqnarray*}

Thus,
\begin{eqnarray*}
z_3&=& \sum _{i=0}^{s-1}(b^D)^{i+1}(-a)^ia^\pi+b^\pi a \sum_{i=0}^{s-2}(i+1)(b^D)^{i+2}(-a)^ia^\pi.
\end{eqnarray*}

Therefore, $x(a+b)x=x$.

Step 3. We prove that $(a+b)-(a+b)^2x$ is nilpotent.

Note that the proof of step 1. We have
\begin{eqnarray*}
&&(a+b)-(a+b)^2x = (a+b)-[a^D\xi^D+\sum_{i=0}^{s-1}(b^D)^{i+1}(-a)^ia^\pi](a+b)^2\\
&\stackrel{(2.5)}{=}&(a+b)-\xi^Da^D(a+b)^2-\sum_{i=0}^{s-1}(b^D)^{i+1}(-a)^i a^\pi(a+b)^2 \\
&=&(a+b)-\xi^Da(a^D(a+b))^2-\sum_{i=0}^{s-1}(b^D)^{i+1}(-a)^i a^\pi(a^2+ab+ba+b^2)\\
&=& a\xi+a^\pi b-\xi^Da(\xi-a^\pi)^2-\sum_{i=0}^{s-1}(b^D)^{i+1}(-a)^i a^\pi a^2\\
&&-\sum_{i=0}^{s-1}(b^D)^{i+1}(-a)^i a^\pi ab-\sum_{i=0}^{s-1}(b^D)^{i+1}(-a)^i a^\pi ba\\
&&- \sum_{i=0}^{s-1}(b^D)^{i+1}(-a)^i a^\pi b^2\\
&\stackrel{(2.13)}{=}& a\xi+a^\pi b-\xi^Da(\xi^2-a^\pi)+\sum_{i=0}^{s-1}(-b^Da)^{i+1}a a^\pi+\sum_{i=0}^{s-1}(-b^Da)^{i+1}a^\pi b \\
&&-\sum_{i=0}^{s-1}(-b^Da)^ib^Da^\pi ba - \sum_{i=0}^{s-1}b^D(-b^Da)^ia^\pi b^2\\
&\stackrel{(3.2)}{=}& a\xi+a^\pi b-\xi^Da(\xi^2-a^\pi)+\sum_{i=0}^{s-1}(-b^Da)^{i+1}a a^\pi+\sum_{i=0}^{s-1}(-b^Da)^{i+1}a^\pi b \\
&&-\sum_{i=0}^{s-1}(-b^Da)^ibb^Daa^\pi - \sum_{i=0}^{s-1}b^D(-b^Da)^ia^\pi b^2\\
&=& a\xi+a^\pi b-\xi^Da(\xi^2-a^\pi)-bb^Daa^\pi +\sum_{i=0}^{s-1}(-b^Da)^{i+1}a^\pi b \\
&&- \sum_{i=0}^{s-1}b^D(-b^Da)^ia^\pi b^2\\
&\stackrel{(2.6)}=& a\xi+a^\pi b-\xi^Da(\xi^2-a^\pi)-bb^Daa^\pi +\sum_{i=0}^{s-1}(-b^Da)^{i+1}a^\pi b \\
&&- \sum_{i=0}^{s-1}(-b^Da)^ib^Da^\pi b^2\\
&\stackrel{(3.2)}{=}& a\xi+a^\pi b-\xi^Da(\xi^2-a^\pi)-bb^Daa^\pi +\sum_{i=0}^{s-1}(-b^Da)^{i+1}a^\pi b \\
&&- \sum_{i=0}^{s-1}(-b^Da)^ibb^Da^\pi b\\
&\stackrel{(2.6)}{=}& a\xi+a^\pi b-\xi^Da(\xi^2-a^\pi)-bb^Daa^\pi +\sum_{i=0}^{s-1}(-b^Da)^{i+1}a^\pi b \\
&&- \sum_{i=0}^{s-1}bb^D(-b^Da)^ia^\pi b\\
&=&a\xi +a^\pi b-\xi^Da\xi^2+\xi^Daa^\pi -bb^Daa^\pi-bb^Da^\pi b\\
&=&(a\xi-\xi^Da\xi^2)+\xi^Daa^\pi+(a^\pi b-bb^Da^\pi b)-bb^Daa^\pi\\
&=&  a\xi \xi^\pi+\xi^Daa^\pi +b^\pi a^\pi b -bb^Daa^\pi\\
&=& b_1+b_2.
\end{eqnarray*}

By Lemma 2.8, $(a+b)-(a+b)^2x=b_1+b_2$ is nilpotent.

The proof is completed.
\end{proof}

\begin{corollary} $[16, {\rm Theorem}~3]$ Let $a,b \in R^D$ with $ab=ba$. Then $a+b$ is Drazin invertible if and only if $1+a^Db$ is Drazin invertible. In this case, we have
\begin{eqnarray*}
(a+b)^D &=& (1+a^Db)^D a^D+b^D(1+aa^\pi b^D)^{-1}a^\pi\\
&=& a^D(1+a^Db)^Dbb^D+b^\pi(1+bb^\pi a^D)^{-1}+b^D(1+aa^\pi b^D)^{-1}a^\pi,
\end{eqnarray*}
and
\begin{center}
$(1+a^Db)^D=a^\pi +a^2a^D(a+b)^D$.
\end{center}
\end{corollary}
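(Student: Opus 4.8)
The plan is to establish the equivalence in both directions, treating the ``if'' direction by an orthogonal splitting of $\xi=1+a^Db$ together with Drazin's sum rule, and treating the ``only if'' direction by exhibiting the displayed expression as $(a+b)^D$ and checking the three defining properties (1.1)--(1.2) by hand.

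For the ``if'' direction I would assume $a+b\in R^D$ and exploit the decomposition $1+a^Db=a^\pi+a^D(a+b)$, which is valid because $a^D(a+b)=aa^D+a^Db$ and $a^\pi=1-aa^D$. Setting $a_1=a^\pi$ and $b_1=a^D(a+b)$, I would first verify that the pair $a^D$, $a+b$ satisfies the standing hypotheses in the form $(a^D)^2(a+b)=a^D(a+b)a^D$ and $(a+b)^2a^D=(a+b)a^D(a+b)$, the former following from (2.3) and the latter from (2.4) together with $ab\in{\rm comm}(a^D)$. Theorem 3.1 then yields $b_1\in R^D$ with $b_1^D=(a^D)^D(a+b)^D=a^2a^D(a+b)^D$. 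Since $a^Db$ commutes with $aa^D$ by Lemma 2.4, one checks $a_1b_1=b_1a_1=0$, so [7, Corollary 1] gives $(1+a^Db)^D=(a^\pi)^D+b_1^D=a^\pi+a^2a^D(a+b)^D$, using that the idempotent $a^\pi$ is its own Drazin inverse. This disposes of the ``if'' direction and the second displayed formula.

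For the ``only if'' direction I would assume $\xi\in R^D$, let $x$ denote the displayed candidate, and split $x=x_1+x_2$ with $x_1=a^D\xi^D+a^\pi b(a^D\xi^D)^2$ and $x_2$ the remaining two sums. The verification has three steps: (1) $x(a+b)=(a+b)x$, (2) $x(a+b)x=x$, and (3) nilpotency of $(a+b)-(a+b)^2x$. Steps (1) and (2) are long but essentially mechanical: after recording the vanishing facts $(a+b)a^\pi b(a^D\xi^D)^2=0$ and $(a+b)b^\pi a(b^D)^2=0$, which collapse via (2.11) and its symmetric analogue (2.8), I would match $x_1(a+b)$ and $x_2(a+b)$ against the pieces of $(a+b)x$, and for (2) expand $x(a+b)x=z_1+z_2+z_3$ using $a+b=a\xi+a^\pi b$ and show $z_1$ reproduces $x_1$, $z_2=0$, and $z_3$ reproduces $x_2$. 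Both computations lean on the commutation relations (2.5)--(2.6), the power formulae (2.9)--(2.13), the telescoping identities $bb^D(ab^D)^i=(b^Da)^i$ and $b^Da^\pi b=bb^Da^\pi$, and the relation $a^sa^\pi=0$ to truncate the sums.

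I expect Step (3) to be the main obstacle. Here I would use $a^D(a+b)=\xi-a^\pi$, hence $(a^D(a+b))^2=\xi^2-a^\pi$, and reduce $(a+b)-(a+b)^2x$ through the relations (2.6), (2.13) and (3.2) until it is seen to equal exactly $b_1+b_2$ with $b_1=a\xi\xi^\pi+\xi^Daa^\pi$ and $b_2=b^\pi a^\pi b-bb^Daa^\pi$; Lemma 2.8 then delivers nilpotency at once. This is the delicate point, because it is the only place where the structural Lemmas 2.7 and 2.8 are invoked to control the cross terms generated by $a^2b=aba$ and $b^2a=bab$. Once the remainder is pinned down to the form handled by Lemma 2.8, the conclusion follows, completing the verification of (1.1)--(1.2) and hence the theorem.
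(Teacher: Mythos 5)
There is a genuine gap: you verify the wrong candidate. The expression you propose to check — the four-term sum $x = a^D\xi^D + a^\pi b(a^D\xi^D)^2 + \sum_{i=0}^{s-1}(b^D)^{i+1}(-a)^ia^\pi + b^\pi a\sum_{i=0}^{s-2}(i+1)(b^D)^{i+2}(-a)^ia^\pi$, split as $x_1+x_2$ — is the formula of Theorem 3.2, not the formula displayed in this Corollary, which is the closed form $(a+b)^D = (1+a^Db)^D a^D + b^D(1+aa^\pi b^D)^{-1}a^\pi$. Nothing in your proposal connects the two, and in fact you never use the hypothesis $ab=ba$ at all (your closing remark about ``cross terms generated by $a^2b=aba$ and $b^2a=bab$'' shows you are arguing in the setting of Theorem 3.2). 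The missing content is precisely what the paper's short proof supplies: since $a$, $b$, $a^D$, $b^D$ all commute, (i) $a^\pi b[a^D(1+a^Db)^D]^2=0$ because $a^\pi a^D=0$, and $b^\pi a(b^D)^{i+2}=0$ because $b^\pi b^D=0$, so two of the four terms vanish identically (not merely after multiplication by $a+b$, which is all your ``vanishing facts'' give); (ii) $aa^\pi b^D$ is nilpotent, hence $1+aa^\pi b^D$ is invertible with $(1+aa^\pi b^D)^{-1}=\sum_{i=0}^{s-1}(-b^Daa^\pi)^i$, which rewrites the surviving sum $\sum_{i=0}^{s-1}(b^D)^{i+1}(-a)^ia^\pi$ as $b^D(1+aa^\pi b^D)^{-1}a^\pi$; and (iii) commutativity turns $a^D(1+a^Db)^D$ into $(1+a^Db)^Da^D$. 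Without these three steps the stated formula for $(a+b)^D$ is simply not established.

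A secondary criticism: the paper proves this Corollary in a few lines by specializing Theorem 3.2, which is already proved. Re-running the whole three-step verification (Lemmas 2.7, 2.8, the $z_1,z_2,z_3$ computation, etc.) is legitimate in principle, since $ab=ba$ implies $a^2b=aba$ and $b^2a=bab$, but it duplicates the hardest part of the paper for no gain — and even after all that work you would still face the gap above, because the Drazin inverse is unique and the Corollary's claim is exactly that it equals this particular closed-form expression. (Neither you nor the paper derives the second displayed expression involving $b^\pi(1+bb^\pi a^D)^{-1}$, but that omission is shared; the substantive defect is the unproved identification of your $x$ with the stated formula.)
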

\begin{proof} Since $ab=ba$, we have $a$, $b$, $a^D$ and $b^D$ commute with each other. Hence, it follows that $a^\pi b[a^D(1+a^Db)^D]^2=0$ and $b^\pi a \displaystyle{\sum_{i=0}^{s-2}(i+1)}(b^D)^{i+2}(-a)^ia^\pi=0$, where $s={\rm ind}(a)$.

Since $aa^\pi b^D$ is nilpotent, $1+aa^\pi b^D$ is invertible and
\begin{eqnarray*}
(1+aa^\pi b^D)^{-1} &=& 1+(-aa^\pi b^D)+(-aa^\pi b^D)^2+\cdots +(-a a^\pi b^D)^{s-1}\\
&=& \sum _{i=0}^{s-1}(-b^Daa^\pi)^i.
\end{eqnarray*}
We have
\begin{eqnarray*}
b^D(1+aa^\pi b^D)^{-1}a^\pi &=& b^D \sum _{i=0}^{s-1}(-b^Daa^\pi)^i a^\pi= b^D \sum_{i=0}^{s-1}(b^D)^i(-a)^ia^\pi\\
&=&\sum_{i=0}^{s-1}(b^D)^{i+1}(-a)^ia^\pi.
\end{eqnarray*}

Therefore, $(a+b)^D = (1+a^Db)^D a^D+b^D(1+aa^\pi b^D)^{-1}a^\pi$.
 \end{proof}

\centerline {\bf ACKNOWLEDGMENTS} This research is supported by the National Natural Science Foundation of China (10971024),
the Specialized Research Fund for the Doctoral Program of Higher Education (20120092110020), the Natural Science Foundation of Jiangsu Province (BK2010393) and the Foundation of Graduate Innovation Program of Jiangsu Province(CXLX13-072).
\bigskip

\end{document}